\newtheorem{theorem}{Theorem}
\newtheorem{problem}{Problem}
\newtheorem{remark}{Remark}
\newtheorem{definition}{Definition}
\newtheorem{corollary}{Corollary}
\newtheorem{lemma}{Lemma}
\newcommand{\CR}{\operatorname{CR}}
\begin{document}

\title[homogeneous spaces with inner metric]{Locally compact homogeneous 
spaces with inner metric}
\author{V.\,N.\,Berestovskii}
\thanks{The author was partially supported by the Russian Foundation 
for Basic Research (Grant 14-01-00068-a) and a grant of the Government 
of the Russian Federation for the State Support of Scientific Research 
(Agreement \No 14.B25.31.0029)}
\address{Sobolev Institute of Mathematics SD RAS, Novosibirsk}
\address{V.N.Berestovskii}
\email{vberestov@inbox.ru}
\maketitle
\maketitle {\small
\begin{quote}
\noindent{\sc Abstract.}
The author reviews his results on locally compact homogeneous spaces 
with inner metric, in particular, homogeneous manifolds with inner metric. 
The latter are isometric to homogeneous (sub-)Finslerian manifolds; under 
some additional conditions they are isometric to homogeneous 
(sub)-Riemannian manifolds. The class $\Omega$ of all locally compact 
homogeneous spaces with inner metric is supplied with some metric $d_{BGH}$ 
such that 1) $(\Omega,d_{BGH})$ is a complete metric space; 2) a sequences 
in $(\Omega,d_{BGH})$ is converging if and only if it is converging in 
Gromov-Hausdorff sense; 3) the subclasses $\mathfrak{M}$ of homogeneous 
manifolds with inner metric and $\mathfrak{LG}$ of connected Lie groups with 
left-invariant Finslerian metric are everywhere dense in $(\Omega,d_{BGH}).$ 
It is given a metric characterization of Carnot groups with left-invariant 
sub-Finslerian metric. At the end are described homogeneous manifolds such 
that any invariant inner metric on any of them is Finslerian. 
\end{quote}}

{\small
\begin{quote}
\textit{Keywords and phrases:}  Carnot group, Cohn-Vossen theorem, 
Gromov-Haudorff limit, homogeneous isotropy irreducible space, homogeneous 
manifold with inner metric, homogeneous space with integrable 
invariant distributions, homogeneous (sub-)Finslerian manifold, 
homogeneous (sub-)Rie\-mannian manifold, Lie algebra, 
Lie group, locally compact homogeneous geodesic space, 
non-holonomic metric geometry, Rashevsky-Chow theorem, shortest arc, 
submetry, symmetric space, tangent cone.
\end{quote}}

\section*{Introduction}

One can observe in last decades an intensive development of non-holonomic metric
geometry and its applications to geometric group theory, analysis, 
$\CR$-manifolds, the theory of hypo-elliptic differential equations, 
non-holonomic mechanics, mathematical physics, thermodynamics, 
neurophysiology of vision etc.  R.Montgomery's book \cite{Mont} gives 
a well written track of this. A natural context for (sub-)Finslerian, 
in particular, (sub-)Riemannian geometry is geometric control theory 
\cite{Jur}, \cite{AS}.

Homogeneous Riemannian and Finslerian manifolds and their 
non-holonomic generalizations, homogeneous sub-Riemannian and 
sub-Finsleian manifolds, are especially important as models, 
because in some cases it is possible to find exactly geodesics, 
shortest arcs, conjugate and cut locus, and even distances for them.   

A simple geometric axiomatic for homogeneous (sub-)Finslerian, 
in particular, (sub-)Riemannian, manifolds in general context of 
locally compact homogeneous spaces with inner metric, have been 
announced in paper \cite{Ber-89.2}. Later appeared proofs of 
this announcement \cite{Ber-88}, \cite{Ber-89}, \cite{Ber-89.1}, 
other interpretations, my later results and their survey in
\cite{BGorb-14}, corrections to proofs of some results cited in \cite{Ber-89.2}.
Also last years some colleagues exhibited an interest in my old results from  
\cite{Ber-89.2}. I am very obliged to professor D.V.Alekseevsky for 
useful discussions on this matter. Hopefully, all this serves as 
enough motivation to present a modified, renewed, relatively short 
(with omission of well-known definitions), version of some statements 
from \cite{Ber-89.2} and \cite{BGorb-14}.

\section{Locally compact homogeneous spaces with inner metric}

Let us remind main definitions. \textit{A path} in a topological 
space $X$ is a continuous map of some closed bounded
interval of the real line to the space $X$. A metric space is 
called \textit{the space with inner metric}, if the distance
between any two its points is equal to the infimum of the length 
of paths joining these points. A metric space
is \textit{homogeneous} if its isometry group acts transitively, 
i.e., for any two points in the space there is an isometry 
(motion) of the space moving one of these points to the other.

The \textit{S.~E.~Cohn-Vossen theorem} \cite{CV} states that every 
locally compact complete space $(M,\rho)$ with inner
metric is \textit{finitely compact}, i.e., any closed bounded 
subset in $(M,\rho)$ (in particular, any closed ball
$B(x,r)$ of radius $r$ with the center at $x$) is compact; moreover, 
the space $(M,\rho)$ is \textit{geodesic}. The last
statement means that any two points of the space can be joined 
by a \textit{segment} or \textit{shortest arc}, i.e., a curve (path) 
of length which is equal to the distance between these points.

Further we suppose that $(M,\rho)$ is  an arbitrary locally compact 
homogeneous space with inner metric $\rho$, and $G=I(M)$ is its 
motion group with compact-open topology with respect to its action 
on $(M,\rho)$, $G_0$ is a connected component of the unit in the 
group $G.$ In view of homogeneity and the local compactness, the 
space $(M,\rho)$ is metrically complete, the S.~E.~Cohn-Vossen 
theorem holds, and so we can use a shorter term 
''\textit{locally compact homogeneous geodesic space}''.

The \textit{Busemann metric}, see \cite{Bu1},
$$\delta_p(f,g)=\sup_{x\in M}\rho(f(x),g(x))e^{-\rho(p,x)},\quad\mbox{where}\quad p\in M,$$ is introduced on the group $G$.

The following results are proved in \cite{Ber}. The metric $\delta_p$ 
depends on the choice of the point $p\in M$, but it is bi-Lipschitz 
equivalent to the metric $\delta_q$ for any point $q\in M,$ and thus, 
independently on the point $p\in M,$ defines a topology $\tau$, which 
coincides with the compact-open topology on $G$ with respect to
its action on $(M,\rho)$. Let us remind that the subbasis of the 
compact-open topology consists of sets
$G(K;U):=\{g\in G| g(K)\subset U\}$, where $K$ is a compact and $U$ 
is an open subset in $M$. The metric $\delta_p$ is invariant under 
the left translations by elements of the group $G$ and under the 
right translations by elements of its compact subgroup $H$, the 
stabilizer of the groups $G$ at a point $p$; for natural 
identification of $G/H$ with $M,$ defined by formula 
$\sigma(gH)=g(p),$ the quotient metric $\Delta_p$ on $G/H,$ 
induced by the metric $\delta_p,$ is equivalent to the metric 
$\rho,$ the metric space $(G,\delta_p)$ is locally compact, complete, 
and separable. The pair $(G,\tau)$ is a topological group acting 
continuously and properly on the left on $(M,\rho)$ by isometries.
The subgroup $G_0$ is transitive on $M.$ 

The following interesting problem is still open.

\begin{problem}
\cite{BerP}
Is it true, that in the general case, the connected group $G_0$ 
or another  transitive  on $M$ closed connected subgroup of
the group $G$ is locally connected or, which is equivalent, 
locally arcwise connected?
\end{problem}

Observe in relation with this, that the paper \cite{BerP}  gives 
a very short proof of a new (at that time) result, that is
a global form of a theorem on the local representation of a 
group as a direct product coming from the Iwasawa-Gleason-Yamabe theory
\cite{Iw, Gl, Ya} for locally compact groups.

\begin{theorem}
\label{glob}
Let $G$ be a connected locally compact (Hausdorff) topological group. 
Then there exists a compact subgroup $K\subset G,$ a connected, 
simply connected Lie group $L,$ and  a surjective local isomorphism 
$\pi: K\times L\rightarrow G.$ Furthermore, if $G$ is locally connected, 
then $K$ is connected and locally connected, and $\pi$ is a covering epimorphism.
\end{theorem}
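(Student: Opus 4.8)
The plan is to reduce the structure of $G$ to that of a Lie group by the Gleason--Yamabe solution of Hilbert's fifth problem, to replace the resulting Lie quotient by its universal cover, and then to assemble $\pi$ out of the compact kernel of the reduction together with a lift of the covering. The two external inputs I would rely on are: (i) the theorem that a connected locally compact group $G$ possesses arbitrarily small compact normal subgroups $N$ with $G/N$ a (connected) Lie group, so that $G=\varprojlim G/N_\alpha$; and (ii) the classical fact that a connected Lie group has a connected, simply connected universal covering group, together with the monodromy principle for lifting homomorphisms out of a simply connected source.

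First I would fix, via (i), a compact normal subgroup $N\trianglelefteq G$ with $\bar G:=G/N$ a connected Lie group, and write $p\colon G\to\bar G$ for the quotient homomorphism. Let $q\colon L\to\bar G$ be the universal covering homomorphism, so that $L$ is connected and simply connected and $Z:=\ker q\cong\pi_1(\bar G)$ is discrete and central in $L$. Because $\bar G$ is a Lie group and $N$ is compact, $p$ is locally trivial: over a neighbourhood of the identity it has a continuous cross-section, which gives $G$ the structure of a local direct product $N\times\bar G$ near the identity. Pulling this section back through $q$ produces a local homomorphism from a neighbourhood of the identity in $L$ into $G$ that covers $q$. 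Setting $K:=N$, I would then define $\pi\colon K\times L\to G$ by $\pi(k,\ell)=k\,\lambda(\ell)$, where $\lambda$ denotes the (a priori only local) lift of $q$. Surjectivity is immediate: $p(\lambda(L))=q(L)=\bar G$, so $K\cdot\lambda(L)$ already exhausts every coset of $N$, hence equals $G$. Near the identity the local product structure shows that $\pi$ restricts to an isomorphism of local groups, which is exactly the assertion that $\pi$ is a surjective local isomorphism; note that, because $N$ need not commute with the Lie direction $\lambda(L)$ and the lift need not globalise, one cannot expect $\pi$ to be a genuine homomorphism at this level of generality.

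For the ''furthermore'' I would use local connectedness to improve both $K$ and $\pi$. In a topological group local connectedness forces the identity component to be open; carried through the local product $N\times\bar G$ this rules out any nontrivial totally disconnected compact normal subgroup sitting inside a small neighbourhood, and hence allows $N$ (equivalently $K$) to be chosen connected, and then locally connected. With $K$ connected, the conjugation action of the connected group $G$ on $K$ takes values in the identity component of $\operatorname{Aut}(K)$, i.e.\ acts by inner automorphisms; after correcting $\lambda$ by a $K$-valued factor one may then assume $K$ and $\lambda(L)$ commute, so that $\pi$ becomes an honest homomorphism. Finally, simple connectivity of $L$ makes the obstruction to globalising $\lambda$ (the class in continuous $H^2(L;K)$ of the pulled-back extension $1\to K\to P\to L\to 1$) vanish, so $\lambda$ extends to a global homomorphism; then $\pi$ is an open continuous epimorphism with discrete kernel, that is, a covering epimorphism.

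The hard part, in my view, is not the formal bookkeeping but verifying the local product/local isomorphism property honestly in the non-Lie setting: one must control the conjugation action of the Lie direction $\lambda(L)$ on the compact kernel $N$ near the identity and show it does not destroy multiplicativity of $\pi$ on a neighbourhood, and separately one must justify the existence of the lift $\lambda$ (local in general, global under local connectedness) via the vanishing of the extension obstruction. A secondary subtlety worth isolating is that the ''small'' compact normal subgroups furnished by Gleason--Yamabe need not be connected --- their profinite component group is precisely what survives in non-locally-connected examples such as solenoids --- which is what prevents $K$ from being connected without the extra hypothesis.
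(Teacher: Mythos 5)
You should first note that this survey states Theorem \ref{glob} without proof, citing \cite{BerP}; the proof there rests on Yamabe's theorem (Theorem \ref{ya} here) together with the Iwasawa--Gluškov \emph{local splitting} theorem \cite{Iw},\cite{Glu} (a neighbourhood of $e$ is a local \emph{direct} product of a compact normal subgroup and a local Lie group, the two factors commuting elementwise) and the monodromy principle: a local homomorphism from a connected, simply connected, locally connected group such as $L$ extends to a global homomorphism. Your skeleton points in the same direction, but your general case has a genuine gap that leaves the theorem unproved: your $\lambda$ is only a \emph{local} lift (you say yourself ``the lift need not globalise''), so $\pi(k,\ell)=k\lambda(\ell)$ is simply not defined on $K\times L$; nevertheless you prove surjectivity by writing $p(\lambda(L))=q(L)=\bar G$, which presupposes the global $\lambda$ you declined to construct. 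Moreover, a mere continuous local cross-section of $p$ gives only a topological local product; it does not make $\pi$ multiplicative near $e$, so the claim that ``the local product structure shows that $\pi$ restricts to an isomorphism of local groups'' is asserted, not proved --- this is precisely the content of Iwasawa's local splitting, which you relegate to ``the hard part'' and never carry out. The repair is to take the local complement to be a local \emph{homomorphic} section centralizing $K$ (Iwasawa), extend it by monodromy to a global homomorphism $\lambda:L\to G$, and then observe that $K\lambda(L)$ is a subgroup containing a neighbourhood of $e$ in the connected group $G$; your substitute appeal to a vanishing class in continuous $H^2(L;K)$ with nonabelian compact coefficients is not a standard or self-justifying device.

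In the ``furthermore'' part one of your steps is concretely false: local connectedness does \emph{not} rule out nontrivial totally disconnected compact normal subgroups inside small neighbourhoods. The group $G=\prod_{n=1}^{\infty}SU(2)$ is compact, connected and locally connected, yet every neighbourhood of $e$ contains nontrivial central totally disconnected compact subgroups of the form $\{1\}^{m}\times\{\pm 1\}^{\{n>m\}}$; note also that this example shows $K$ may have to be infinite-dimensional and non-Lie, so ``no small subgroups'' intuitions are unavailable. The correct mechanism is different: local connectedness of $G$ transfers through the local isomorphism to the germ of $K\times L$, forcing $K$ to be locally connected, whence $K_0$ is open of finite index in $K$, and one may replace $K$ by $K_0$ while keeping surjectivity because $\pi(K_0\times L)$ still contains a neighbourhood of $e$ in the connected group $G$. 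Finally, your inner-automorphism correction needs both the fact that the identity component of $\mathrm{Aut}(K)$ consists of inner automorphisms and a continuous local lifting through $K\to K/Z(K)$; the latter can fail to admit local cross-sections in infinite dimensions (e.g.\ $\prod_{n}SU(2)\to\prod_{n}SO(3)$ has none), so even that step requires more care than you give it.
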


The following characterization of locally compact homogeneous 
geodesic spaces as homogeneous spaces of topological groups \cite{BerP} holds.

\begin{theorem}
\label{BP}
Every locally compact homogeneous geodesic space is isometric to 
some locally compact locally connected quotient space
$G/H$ of a connected locally compact topological group $G$ 
with the first countability axiom,  by a compact subgroup $H,$
endowed with a $G$-invariant geodesic metric.

Conversely, every locally connected, locally compact homogeneous 
quotient space $G/H$ of a connected locally compact topological
group $G$ with the first countability axiom  by a compact 
subgroup $H$ admits a  $G$-invariant geodesic metric $\rho$.
\end{theorem}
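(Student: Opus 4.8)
The plan is to treat the two directions separately: for the first (easy) direction I would assemble the facts about the Busemann metric quoted from \cite{Ber}, while for the converse (hard) direction I would invoke the structure Theorem \ref{glob}.

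For the forward direction, let $(M,\rho)$ be a locally compact homogeneous geodesic space and let $G=I(M)$ carry its Busemann metric $\delta_p$. By the results of \cite{Ber} recalled above, $(G,\tau)$ is a topological group that is locally compact, complete and separable (hence first countable), its action on $(M,\rho)$ is continuous and proper, the stabilizer of a point $p$ is compact, and the identity component $G_0$ already acts transitively. I would therefore take $G:=G_0$ and $H:=\{g\in G_0:g(p)=p\}$, which is closed in the compact stabilizer and hence compact; then $gH\mapsto g(p)$ is a $G_0$-equivariant homeomorphism $G_0/H\to M$ (its induced quotient metric being equivalent to $\rho$ by \cite{Ber}), so transporting $\rho$ back yields a $G_0$-invariant geodesic metric on $G_0/H$ making $M$ isometric to $G_0/H$. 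Local compactness is inherited from $M$, and the only genuinely new point is local connectedness: in a space with inner metric every ball $B(x,s)$ is path-connected, since a point $w$ on a path from $x$ of length $\ell<s$ satisfies $\rho(x,w)\le\ell<s$ and so stays in the ball; thus $M$ is locally arcwise connected, which finishes this direction.

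For the converse I am given $G/H$ locally connected with $G$ connected, locally compact, first countable and $H$ compact, and must manufacture a $G$-invariant geodesic metric. Since $G$ is first countable it is metrizable by a left-invariant metric in the sense of Birkhoff--Kakutani, and averaging over the compact group $H$ produces a $G$-invariant metric $d$ on $X:=G/H$ inducing its topology. I would then pass to the induced \emph{length} metric $\rho$, taking $\rho(x,y)$ to be the infimum of the $d$-lengths of paths from $x$ to $y$; this $\rho$ is automatically $G$-invariant, and once it is shown to be finite-valued and to induce the original topology, the space $(X,\rho)$ is locally compact, complete and intrinsic, so the Cohn--Vossen theorem makes it geodesic.

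The crux, and the step I expect to be the main obstacle, is exactly that finiteness and topological compatibility of $\rho$ do \emph{not} follow from local connectedness alone, since a connected (even path-connected) set need not be rectifiably connected. To bridge this I would use Theorem \ref{glob}, which provides a compact subgroup $K$, a connected simply connected Lie group $L$, and a surjective local isomorphism $\pi\colon K\times L\to G$, thereby giving $G$, and hence $X$, a local model built from a genuine Lie group and a compact group. On the factor $L$ one places a left-invariant Finslerian (hence intrinsic and complete) metric coming from a norm on its Lie algebra, while the compact factor $K$, realized as an inverse limit of compact Lie groups, is equipped with a bi-invariant metric whose successive factor diameters are chosen summable, so that paths in the $K$-directions have finite length. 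The real difficulty is that $G$ itself need not be locally connected even though $X$ is, so that $K$ may well fail to be locally connected (a solenoid-type factor admits no compatible inner metric); here the hypothesis that $X=G/H$ be locally connected must be used decisively to force the non-locally-connected part of the compact factor to be absorbed into the stabilizer, so that $X$ is the quotient by it and only the rectifiably connected directions survive. Carrying out this reduction, together with the bookkeeping for the discrete kernel of $\pi$, is what lets the left-invariant intrinsic metric on $K\times L$ descend, via the compactness of $H$, to a $G$-invariant intrinsic metric on $X$ that is locally rectifiably connected and compatible with the topology; this passage from topological local connectedness to metric local rectifiable connectedness is the heart of the argument.
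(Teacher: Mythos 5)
Note first that the paper itself contains no proof of Theorem~\ref{BP}: it is quoted from \cite{BerP}, and the surrounding machinery (Theorem~\ref{ya}, Lemma~\ref{gl}, Definition~\ref{subm}, Theorem~\ref{invlim}) indicates the intended mechanism. Your forward direction is essentially correct and is the standard argument: the facts from \cite{Ber} about $(G,\delta_p)$ (locally compact, separable metric, hence first countable; proper continuous isometric action; compact stabilizer; $G_0$ transitive, with quotient metric equivalent to $\rho$) give the identification $M\cong G_0/H$, and local connectedness comes for free because open balls in a space with inner metric are rectifiably path-connected. No objection there.

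The converse, however, has a genuine gap, and it is exactly at the step you yourself flag and then leave as a declaration. You assert that local connectedness of $X=G/H$ ``must be used decisively to force the non-locally-connected part of the compact factor to be absorbed into the stabilizer,'' but you give no argument, and no such subgroup-level splitting follows from Theorem~\ref{glob}: that theorem only produces a covering epimorphism $\pi\colon K\times L\to G$ with $K$ connected and locally connected when $G$ \emph{itself} is locally connected, which is precisely what you cannot assume (the Corollary after Theorem~\ref{BP} shows local connectedness of $G$ is equivalent to $G$ carrying an invariant inner metric, so your route is circular: equipping $K$ with an invariant inner metric of summable factor diameters already presupposes $K$ locally connected). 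The construction used in \cite{BerP} and \cite{Ber-89} avoids any decomposition of $G$ into factors. Instead one applies Yamabe's Theorem~\ref{ya} together with Glushkov's Lemma~\ref{gl} to produce a decreasing sequence of compact normal subgroups $N_n$ with $\bigcap_n N_n=\{e\}$ and $G/N_n$ Lie; the spaces $M_n=(G/N_n)/(HN_n/N_n)$ are homogeneous manifolds, one chooses invariant Riemannian (Finslerian) metrics $\rho_n$ on them inductively so that the bonding maps $p_{nm}$ are submetries whose fiber diameters are summable, and the invariant geodesic metric on $G/H$ arises as the uniform limit $\rho=\lim_n \rho_n\circ(p_n\times p_n)$, as codified in Theorem~\ref{invlim}. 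Local connectedness of $G/H$ enters there to guarantee that the fibers of these submetries can be taken connected, which is what makes the limit metric inner (your solenoid example shows exactly how the limit fails to be inner when fibers are disconnected). So while your diagnosis of the obstacle is accurate, the decisive passage from topological local connectedness of $G/H$ to an invariant intrinsic metric is missing from your argument, and the tool you propose (Theorem~\ref{glob}) cannot supply it.
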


\begin{corollary}
A locally compact topological group $(G,\tau)$ admits some 
left-invariant geodesic metric if and only if $(G,\tau)$ is 
connected, locally connected, and satisfies the first countability axiom.
\end{corollary}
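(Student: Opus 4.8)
The plan is to recognize this corollary as the special case $H=\{e\}$ of Theorem \ref{BP}, using three facts: that $\{e\}$ is a compact subgroup of $G$, that $G/\{e\}$ is canonically identified with $G$ itself under the map $\sigma(g\{e\})=g$, and that a metric on $G/\{e\}=G$ is $G$-invariant with respect to the natural left action precisely when it is left-invariant. The two implications then follow from the two halves of Theorem \ref{BP}, supplemented by two elementary observations: a geodesic metric space is path-connected, hence connected, and every metric space satisfies the first countability axiom.

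For the necessity direction I would start from a left-invariant geodesic metric $\rho$ on $(G,\tau)$, where by the meaning of the statement $\rho$ is compatible with $\tau$, i.e.\ induces the topology $\tau$. Then left translations act transitively on $G$ by $\rho$-isometries, so $(G,\rho)$ is a homogeneous space; it is locally compact because $\tau$ is, and it is geodesic by hypothesis. Hence $(G,\rho)$ is a locally compact homogeneous geodesic space. Connectedness follows because a geodesic space is path-connected: any two points are joined by a shortest arc, which is a path. First countability is automatic for any metric space, the balls of radius $1/n$ giving a countable neighborhood base. For local connectedness I would invoke the first assertion of Theorem \ref{BP}, by which $(G,\rho)$ is isometric to a locally connected space; since an isometry is a homeomorphism, $(G,\tau)$ is itself locally connected.

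For the sufficiency direction I would assume $(G,\tau)$ is connected, locally connected, and first countable, and apply the converse half of Theorem \ref{BP} to the compact subgroup $H=\{e\}$. The quotient $G/\{e\}=G$ is then a locally connected, locally compact homogeneous quotient of the connected locally compact topological group $G$ satisfying the first countability axiom, so it admits a $G$-invariant geodesic metric $\rho$. Since $G$-invariance under the left action coincides with left-invariance, this $\rho$ is the desired left-invariant geodesic metric, and the induced topology agrees with $\tau$.

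The mathematical content of the statement is entirely carried by Theorem \ref{BP}, so once that theorem is granted I expect no serious obstacle; the only points requiring care are bookkeeping ones. I would need to confirm explicitly that $\{e\}$ qualifies as a compact subgroup, that the compatibility hypothesis on $\rho$ is exactly what makes ``$(G,\rho)$ locally compact'' equivalent to ``$(G,\tau)$ locally compact,'' and that the $G$-invariance appearing in Theorem \ref{BP} really reduces to left-invariance under the identification $\sigma(gH)=g(p)$ recorded in the preceding discussion.
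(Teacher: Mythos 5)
Your proposal is correct and follows exactly the route the paper intends: the corollary is the special case $H=\{e\}$ of Theorem~\ref{BP}, with $G$-invariance of the metric on $G/\{e\}\cong G$ reducing to left-invariance, connectedness coming from path-connectedness of a geodesic space, and first countability from metrizability. The only simplification worth noting is that local connectedness in the necessity direction can be seen directly (open balls in a geodesic space are path-connected, since shortest arcs from the center stay inside the ball) rather than by re-invoking the first half of Theorem~\ref{BP}, but your argument is equally valid.
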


\begin{theorem}
\label{ya}
\cite{Ya}
Any neighbourhood $U$ of the unit $e$ in a connected locally compact
topological group  $G$ contains closed (even compact) normal subgroups 
$N=N_U$ with the quotient group $G/N,$ which is a (connected) Lie group.
\end{theorem}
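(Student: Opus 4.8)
The plan is to derive this from the structural Theorem \ref{glob} stated just above, together with the classical fact that compact groups are pro-Lie. By Theorem \ref{glob} I may fix a compact subgroup $K\subset G$, a connected simply connected Lie group $L$, and a surjective local isomorphism $\pi\colon K\times L\to G$; set $\Gamma=\ker\pi$, so that $G\cong(K\times L)/\Gamma$. Because $\pi$ is a local isomorphism, I can choose a product neighbourhood $V_K\times V_L$ of the unit in $K\times L$ that $\pi$ carries homeomorphically into the prescribed neighbourhood $U$.

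Next I would invoke the Peter--Weyl theorem in the following form: in a compact group $K$ every neighbourhood $V_K$ of the unit contains a closed normal subgroup $N_K$ with $K/N_K$ a compact Lie group. Indeed, the continuous finite-dimensional unitary representations of $K$ separate points, so $\{e\}=\bigcap_{\rho}\ker\rho$; each $\ker\rho$ is compact with $K/\ker\rho$ a matrix group, and since $K\setminus V_K$ is compact a finite subintersection $N_K=\ker\rho_1\cap\dots\cap\ker\rho_m\subset V_K$ already works, with $K/N_K\hookrightarrow\prod_i U(n_i)$ a compact Lie group.

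The subgroup I propose is $N:=\pi(N_K\times\{e\})$. It is compact, being the continuous image of the compact set $N_K\times\{e\}$; it is normal in $G$ since $N_K\times\{e\}$ is normal in $K\times L$ and $\pi$ is a surjective homomorphism; and $N\subset\pi(V_K\times V_L)\subset U$. To identify the quotient I compose $\pi$ with $G\to G/N$, obtaining a surjection $\psi\colon K\times L\to G/N$ whose kernel is $(N_K\times\{e\})\Gamma$; this set is closed because a compact set times a closed set is closed in a topological group. As $\psi$ annihilates $N_K\times\{e\}$, it descends to the Lie group $(K/N_K)\times L$, giving $G/N\cong\bigl((K/N_K)\times L\bigr)/\bar\Gamma$, where $\bar\Gamma$ denotes the image of $\Gamma$. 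The subgroup $\bar\Gamma$ is normal, and it is closed because its preimage $(N_K\times\{e\})\Gamma$ is closed; hence, as a quotient of a Lie group by a closed normal subgroup, $G/N$ is a Lie group, and it is connected since $G$ is.

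The genuine difficulty of the statement is entirely absorbed by Theorem \ref{glob}, which already encapsulates the hard Gleason--Yamabe analysis producing the global splitting $K\times L\to G$; granting it, the remaining work is bookkeeping. The one point demanding care is the final step: one must check that $\bar\Gamma$ is honestly closed and normal so that the quotient is a bona fide Lie group, and the crucial ingredient there is precisely that $(N_K\times\{e\})\Gamma$ is closed as a product of a compact and a closed set.
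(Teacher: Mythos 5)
Your argument has a genuine flaw, but it is logical rather than computational: it is circular. Theorem~\ref{ya} is Yamabe's small-subgroups theorem, the foundational analytic input of the Gleason--Yamabe solution of Hilbert's fifth problem; the paper does not prove it at all but imports it from \cite{Ya}. Theorem~\ref{glob}, which you take as your starting point, is presented in the paper explicitly as ``a global form of a theorem on the local representation of a group as a direct product coming from the Iwasawa-Gleason-Yamabe theory \cite{Iw, Gl, Ya}''; its proof in \cite{BerP}, like every known proof of such a splitting, passes through Theorem~\ref{ya}: one first uses Yamabe's theorem to approximate $G$ by Lie quotients, and only then extracts the compact factor $K$ and the simply connected Lie factor $L$. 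So deriving \ref{ya} from \ref{glob} derives the theorem from one of its own descendants. None of the genuinely hard content --- Gleason's no-small-subgroups analysis, the construction of one-parameter subgroups, the pro-Lie approximation of $G$ --- appears in your argument, and it cannot be recovered from your steps; it is, as you say yourself, ``entirely absorbed by Theorem~\ref{glob},'' which is precisely the problem.

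Granting \ref{glob} as a black box, your bookkeeping is essentially correct: $\Gamma=\ker\pi$ is discrete and closed, Peter--Weyl does yield arbitrarily small closed normal subgroups $N_K$ of $K$ with $K/N_K$ a compact Lie group, $N=\pi(N_K\times\{e\})$ is compact, normal, and contained in $U$, the set $(N_K\times\{e\})\Gamma$ is closed as a product of a compact set and a closed set, and $\bigl((K/N_K)\times L\bigr)/\bar\Gamma$ is a Lie group. The one step you leave implicit is that the isomorphism $G/N\cong\bigl((K/N_K)\times L\bigr)/\bar\Gamma$ is topological and not merely algebraic: this needs the remark that a local isomorphism is an open map, hence $\psi$ is open and the induced bijection is a homeomorphism. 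So what you have actually shown is the (worthwhile) observation that \ref{glob} together with Peter--Weyl formally implies \ref{ya}, i.e.\ that the global splitting already encodes the small-subgroups property; as a proof of \ref{ya} itself, however, the proposal assumes its conclusion's deepest part and would have to be replaced by (or reduced to) the original argument of \cite{Ya}.
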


\begin{lemma}
\label{gl}
\cite{Glu} 
If $N_1$ and $N_2$ are
normal subgroups of a locally compact topological group $G$ such 
that $G/N_1$ and $G/N_2$ are Lie groups, then
$G/(N_1\cap N_2)$ is also a Lie group.
\end{lemma}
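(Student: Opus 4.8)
The plan is to exhibit $G/(N_1\cap N_2)$ as a locally compact group with no small subgroups and then invoke the structure theory behind Theorem \ref{ya}. First I would record that each $N_i$ is closed: the quotient $G/N_i$ is a Lie group, hence Hausdorff, and a topological quotient $G/N_i$ is Hausdorff precisely when $N_i$ is closed in $G$. Consequently $N_1\cap N_2$ is a closed normal subgroup, and $L:=G/(N_1\cap N_2)$ is again a locally compact Hausdorff topological group, being the quotient of a locally compact group by a closed normal subgroup.

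Next I would introduce the continuous homomorphism
$$\phi\colon G\longrightarrow G/N_1\times G/N_2,\qquad \phi(g)=(gN_1,gN_2),$$
whose kernel is exactly $N_1\cap N_2$. Since its target is a product of two Lie groups, it is itself a Lie group. Passing to the quotient, $\phi$ factors as $\phi=\bar\phi\circ q$, where $q\colon G\to L$ is the canonical projection and $\bar\phi\colon L\to G/N_1\times G/N_2$ is a \emph{continuous injective} homomorphism. I would then transfer the no-small-subgroups property across $\bar\phi$: a Lie group has no small subgroups, so there is a neighbourhood $W$ of the identity in $G/N_1\times G/N_2$ containing no subgroup other than the trivial one. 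Then $\bar\phi^{-1}(W)$ is a neighbourhood of the identity in $L$, and if it contained a subgroup $S$, then $\bar\phi(S)\subset W$ would be a subgroup, hence trivial; injectivity of $\bar\phi$ forces $S=\{e\}$. Thus $L$ has no small subgroups, and by the Gleason--Yamabe characterization of locally compact Lie groups (\cite{Gl},\cite{Ya}) the group $L=G/(N_1\cap N_2)$ is a Lie group, as claimed.

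The hard part is not the bookkeeping above but avoiding a natural trap. The tempting shortcut is to identify $L$ with the image $\phi(G)$ and apply Cartan's closed-subgroup theorem; this fails because $\phi(G)$ need not be closed in $G/N_1\times G/N_2$ (its closedness amounts to $N_1N_2$ being closed, which is not among the hypotheses), so $\bar\phi$ is injective but in general not a topological embedding. The whole point of the argument is therefore to dispense with closedness entirely: one uses the \emph{intrinsic} local compactness of $L$ together with the no-small-subgroups property pulled back through the merely injective map $\bar\phi$, and the genuinely deep input is precisely the theorem that a locally compact group with no small subgroups is a Lie group.
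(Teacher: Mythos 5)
Your proof is correct. The paper itself gives no argument for this lemma, citing it to Glushkov's survey \cite{Glu}, and your route is the standard one behind it: factor the diagonal homomorphism $\phi\colon G\to G/N_1\times G/N_2$ through $L=G/(N_1\cap N_2)$, pull the no-small-subgroups property back along the continuous injective $\bar\phi$, and invoke the Gleason--Yamabe theorem that a locally compact Hausdorff NSS group is Lie. All the delicate points are handled properly --- closedness of the $N_i$ (hence Hausdorffness and local compactness of $L$), the fact that injectivity rather than embedding suffices for transferring NSS, and your parenthetical observation that closedness of $\phi(G)$ is equivalent to closedness of $N_1N_2$ (and so cannot be assumed) is accurate.
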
 

We need the following definition in order to formulate other 
structural results on locally compact homogeneous geodesic spaces.

\begin{definition}
\label{subm}
\cite{BG-00}
A map of metric spaces $f\colon M\rightarrow N$ is said to be 
\textit{submetry}, if for any point $x\in M$, and any number
$r>0,$ we have $f(B_M(x,r))=B_N(f(x),r).$ Here $B$ denotes 
the closed ball of corresponding radius in the corresponding space.
\end{definition}

\begin{theorem}
\cite{BG-00}
Any Riemannian submersion of complete smooth Riemannian manifolds 
is a submetry. Conversely, a submetry of smooth Riemannian manifolds 
is the Riemannian submersion of class $C^{1,1}$.
\end{theorem}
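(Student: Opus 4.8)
I would prove the two implications separately; the forward one is soft, while the converse carries the analytic weight. For the direction asserting that a Riemannian submersion $f\colon M\to N$ is a submetry, I would first recall two standard facts: $f$ is distance non-increasing, and any curve in $N$ has a horizontal lift of equal length through any prescribed point of the fibre over its initial point, with horizontal lifts of geodesics again geodesics. The non-increasing property gives $f(B_M(x,r))\subseteq B_N(f(x),r)$ at once, since $d_N(f(x),f(y))\le d_M(x,y)\le r$. For the reverse inclusion I would use completeness of $N$ to choose a minimizing geodesic from $f(x)$ to a given $q\in B_N(f(x),r)$ of length $d_N(f(x),q)\le r$, lift it horizontally from $x$ (the lift is defined for all parameters, because $M$ is complete and the lift is a geodesic of bounded length, hence cannot escape a compact ball), and note that its endpoint $\tilde q$ satisfies $f(\tilde q)=q$ and $d_M(x,\tilde q)\le r$. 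Thus $q\in f(B_M(x,r))$ and the two balls coincide.

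The converse, that a submetry of smooth Riemannian manifolds is a $C^{1,1}$ Riemannian submersion, is the substantial direction. Directly from the ball identity a submetry is $1$-Lipschitz and open. The decisive first step is the fibre-distance formula: for $F=f^{-1}(q)$ and any $x'\in M$,
\[
d_M(x',F)=d_N(f(x'),q),
\]
where $d_N(f(x'),q)\le d_M(x',F)$ is the $1$-Lipschitz estimate, and the reverse inequality follows by applying the ball identity with $r=d_N(f(x'),q)$ to produce a point of $F$ within distance $r$ of $x'$.

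Next I would exploit horizontal geodesics. A minimizing geodesic $\gamma$ realizing $d_M(x,F)$ projects under the $1$-Lipschitz map $f$ to a path joining $f(x)$ to $q$ of length at most that of $\gamma$; by the fibre-distance formula this projection has length exactly $d_N(f(x),q)$, hence is a minimizing geodesic in $N$, so $f$ preserves the length of $\gamma$. Because $d_M(\cdot,F)=d_N(f(\cdot),q)$ is constant on the fibre $f^{-1}(f(x))$, the initial velocity of $\gamma$ — up to sign the gradient of the distance-to-$F$ function — is normal to that fibre. Letting the target fibre $F$ vary, I would identify $f^{-1}(f(x))$ as a submanifold of dimension $\dim M-\dim N$, define the horizontal space $H_x$ to be its normal space, and verify that $df_x$ restricts to a linear isometry of $H_x$ onto $T_{f(x)}N$ while horizontal geodesics remain horizontal. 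This exhibits $f$ as a submersion carrying the Riemannian-submersion differential.

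The main obstacle is the regularity. Establishing that $f$ is differentiable with the stated differential, and that the distribution $x\mapsto H_x$ (equivalently, the family of fibres) is Lipschitz but in general no smoother, requires comparison estimates — Jacobi-field or Rauch-type bounds on $M$ — controlling how fast horizontal geodesics issuing from nearby points separate and how the equidistant fibres bend. The Lipschitz, but generally not $C^1$, dependence of $H_x$ is precisely what caps the regularity at $C^{1,1}$; exhibiting a submetry that fails to be $C^2$ would confirm that this bound is sharp.
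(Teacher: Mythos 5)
The paper contains no proof of this theorem: it is quoted verbatim from \cite{BG-00} (Berestovskii--Guijarro), so your attempt can only be measured against that source. Your forward direction is correct and complete, and it is the standard argument: $1$-Lipschitzness of a Riemannian submersion gives $f(B_M(x,r))\subseteq B_N(f(x),r)$, and the reverse inclusion follows by horizontally lifting a minimizing geodesic (Hopf--Rinow in $N$ for existence, completeness of $M$ so that the lifted geodesic is defined on the whole parameter interval). Your fibre-distance formula $d_M(x',f^{-1}(q))=d_N(f(x'),q)$ and the observation that minimizers to fibres project to minimizing geodesics are also correct and are indeed the opening moves of the cited proof.

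The genuine gap is in the converse, and it is twofold. First, your construction is circular: you define the horizontal space $H_x$ as the normal space to the fibre $f^{-1}(f(x))$ and propose to ``identify'' that fibre as a submanifold of dimension $\dim M-\dim N$ --- but at that stage of the argument the fibre is only a closed set; that it is a ($C^{1,1}$) submanifold possessing normal spaces is precisely the conclusion of the theorem, not an available input, and nothing in your sketch supplies it. Second, the analytic core --- differentiability of $f$ with the stated differential and the $C^{1,1}$ bound --- is waved at with ``Jacobi-field or Rauch-type bounds'' on the separation of horizontal geodesics, which is not the operative mechanism and by itself produces no linear map $df_x$. The proof in \cite{BG-00} instead exploits two-sided tangency: through every point of a fibre there pass horizontal minimizing geodesics in both directions, so the distance function to a fibre is touched from above \emph{and} below by smooth functions with locally uniformly bounded Hessians; being simultaneously semiconcave and semiconvex, it is $C^{1,1}$, the fibres are level sets of such functions with non-vanishing gradient, and $f$ inherits $C^{1,1}$ regularity locally from these distance functions. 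Note also that the converse carries no completeness hypothesis, so the minimizers you invoke must be produced locally (fibres are closed since a submetry is $1$-Lipschitz, and small closed balls suffice), and the sharpness assertion --- Lipschitz but generally non-$C^1$ horizontal distribution --- requires an explicit example, which you only postulate.
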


On the ground of theorems \ref{BP}, \ref{ya}, lemma \ref{gl}, and definition
\ref{subm}, we prove the following

\begin{theorem}
\label{invlim}
\cite{Ber-89.2, Ber-89}
A metric space $(M,\rho)$ is a locally compact homogeneous geodesic 
space, if and only if, it can be represented as the
inverse metric limit of some sequence $(M_n=(G/N_n)/(HN_n/N_n),\rho_n),$
where $N_n$ is non-increasing sequence of compact normal subgroups of $G$ 
such that $\cap_{n=1}^{\infty}N_n=\{e\}$, of 
homogeneous geodesic manifolds bound by the proper
(the preimage of a compact set is compact) submetries 
$p_{nm}\colon (M_m,\rho_m)\rightarrow (M_n,\rho_n), n\leq m,$ and
$p_{n}:(M,\rho)\rightarrow (M_n,\rho_n),$ where $p_{n}=p_{nm}\circ p_m$ and
$p_{ns}=p_{nm}\circ p_{ms}$ if $n\leq m\leq s.$

This means that (non-decreasing) functions $\rho_n\circ (p_n\times p_n)$ 
uniformly converge to the metric $\rho.$ Under this condition, 
$p_{nm}\in C^{\infty}$, and one can assume that the fibers of 
these submetries are connected. 
\end{theorem}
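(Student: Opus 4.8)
The plan is to prove both implications using the representation from Theorem~\ref{BP} together with the Iwasawa--Gleason--Yamabe approximation provided by Theorem~\ref{ya} and Lemma~\ref{gl}. For the direct implication, I would start from $(M,\rho)=G/H$ as in Theorem~\ref{BP}, so that $G$ is connected, locally compact, first countable, $H$ is compact, and $\rho$ is $G$-invariant and geodesic. Fix a decreasing countable neighbourhood basis $\{U_n\}$ of $e$. By Theorem~\ref{ya} choose, for each $n$, a compact normal subgroup $N_n'\subset U_n$ with $G/N_n'$ a Lie group, and set $N_n=N_1'\cap\cdots\cap N_n'$; Lemma~\ref{gl} and induction give that each $G/N_n$ is a Lie group, while $N_n\subset U_n$ is non-increasing with $\bigcap_n N_n=\{e\}$. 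Then $M_n:=(G/N_n)/(HN_n/N_n)$ is a homogeneous manifold, the projection $p_{nm}$ (resp.\ $p_n$) is induced by the Lie-group epimorphism $G/N_m\to G/N_n$ (resp.\ $G\to G/N_n$), and I equip $M_n$ with the quotient metric $\rho_n(p_n x,p_n y)=\inf\{\rho(x',y'): p_n x'=p_n x,\ p_n y'=p_n y\}$, which is geodesic because it is the orbit metric of the isometric action of the compact group $N_n$ on the geodesic space $(M,\rho)$; by construction every $p_{nm},p_n$ is a submetry, and properness follows from finite compactness together with compactness of the fibres.

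The heart of the matter is the uniform convergence $\rho_n\circ(p_n\times p_n)\to\rho$. Here I would exploit \emph{normality}: the fibre of $p_n$ through $x=gH$ is the orbit $N_n\cdot x$, and, using $G$-invariance of $\rho$ together with $g^{-1}N_ng=N_n$, its diameter equals $\delta_n:=\sup_{\nu\in N_n}\rho(p,\nu p)$ \emph{independently of} $x$, where $p=eH$. Since $N_n\subset U_n$ shrinks to $\{e\}$ and the $G$-action is continuous, $\delta_n\to 0$, and from $\rho_n(p_nx,p_ny)\le\rho(x,y)\le\rho_n(p_nx,p_ny)+2\delta_n$ one reads off both monotonicity in $n$ and uniform convergence. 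The smoothness $p_{nm}\in C^\infty$ is automatic, since a continuous homomorphism of Lie groups is analytic and descends to a smooth map of the associated homogeneous spaces; the connectedness of the fibres is arranged by a refinement that passes to identity components $N_n^0$, which I expect to be the one genuinely technical point.

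For the converse, regard $M$ as $\varprojlim(M_n,p_{nm})$ with $\rho$ the uniform limit of $\rho_n\circ(p_n\times p_n)$. Each $M_n$ carries a transitive isometric $G$-action through $G\to G/N_n$, compatible with the $p_{nm}$, inducing an isometric $G$-action on $M$; transitivity on the inverse limit I would obtain by intersecting the nested, nonempty, compact cosets $\{g\in G: g\,p_n x=p_n y\}$, whose stabilizers $HN_n$ are compact. Local compactness follows because $p_n$ is a proper submetry, so $B_M(x,r)\subset p_n^{-1}\bigl(B_{M_n}(p_nx,r)\bigr)$ is contained in a compact set and closed balls are therefore compact; and $\rho$ is an inner metric because a path realizing $\rho_n$ in the geodesic manifold $M_n$ lifts through the submetry $p_n$ to a path of the same length in $M$, whose endpoint differs from the prescribed one by at most the fibre diameter, which the uniform convergence forces to $0$.

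I regard two steps as the principal obstacles. In the direct part, the key insight is that normality of $N_n$ makes the fibre diameters uniform across all of $M$, which is exactly what upgrades pointwise to uniform convergence; without this observation the convergence could fail to be uniform. In the converse part, the transitivity of the limiting $G$-action and the lifting of length-minimizing paths through the submetries are the delicate points, both resting on compactness of the stabilizers and of the fibres.
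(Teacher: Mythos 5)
Your overall strategy coincides with the route the paper itself indicates (this survey gives no detailed proof, saying only that Theorem~\ref{invlim} is proved ``on the ground of'' Theorems~\ref{BP}, \ref{ya}, Lemma~\ref{gl} and Definition~\ref{subm}, with details in \cite{Ber-89}), and your main line is sound: you obtain $M=G/H$ from Theorem~\ref{BP}, produce the non-increasing sequence $N_n$ by intersecting Yamabe subgroups chosen inside a countable decreasing neighbourhood basis and invoking Lemma~\ref{gl} inductively, equip $M_n=(G/N_n)/(HN_n/N_n)$ with the orbit metric, and your key observation is exactly the right one: normality of $N_n$ makes every fiber of $p_n$ a translate $g\cdot(N_n\cdot p)$ of the fiber over the base point, so the fiber diameters $\delta_n$ are uniform over $M$ and tend to $0$, giving the sandwich $\rho_n\circ(p_n\times p_n)\le\rho\le\rho_n\circ(p_n\times p_n)+\delta_n$ and hence uniform convergence (monotonicity, incidentally, comes from $N_m\subset N_n$ for $m\ge n$, not from the sandwich). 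Properness, smoothness of $p_{nm}$, and the converse direction (transitivity via nested nonempty compact cosets of the compact stabilizers $HN_n$, bounded compactness via properness of $p_n$, geodesicity by lifting shortest arcs through the submetries, with fiber diameters controlled by $\sup|\rho-\rho_n\circ(p_n\times p_n)|\to 0$, and an Arzel\`a--Ascoli limit) are all handled correctly.

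The one genuine gap is the final clause of the theorem --- that the fibers may be assumed connected --- which you defer to ``a refinement that passes to identity components $N_n^0$.'' That refinement fails. First, $G/N_n^0$ need not be a Lie group: $N_n/N_n^0$ is a compact totally disconnected normal, hence central, subgroup of the connected group $G/N_n^0$, and if it is infinite it contains arbitrarily small nontrivial subgroups, so $G/N_n^0$ cannot be Lie. You cannot rule this out by local connectedness of $G$, since whether $G_0$ is locally connected is precisely the open Problem~1 of the paper. Concretely, for $G=\Sigma_p$ the $p$-adic solenoid acting on $M=S^1=G/\mathbb{Z}_p$ (a legitimate instance of Theorem~\ref{BP}) one has $N_n=p^n\mathbb{Z}_p$, $N_n^0=\{e\}$, and $G/N_n^0=\Sigma_p$ is not Lie, so passing to identity components destroys the manifold structure of the $M_n$. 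Second, even where the quotient makes sense, the connected components of the fiber $gHN_n/H$ are the cosets of $K_n:=(HN_n)^0H$, and $HN_n^0\subset K_n$ may be strictly smaller, so $N_n^0$-orbits need not be components of $N_n$-orbits. A correct treatment must work with $K_n$ itself: its cosets partition $M$ into the components of the fibers of $p_n$, the length quotient $G/K_n$ is again an inner metric space and hence locally connected, which forces the totally disconnected fiber $HN_n/K_n$ of $G/K_n\to G/HN_n$ to be finite, and only then can one repair the representation --- a genuine extra idea, not a routine passage to identity components.
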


In some sense, Theorem~\ref{invlim} reduces the study of 
locally compact homogeneous geodesic spaces to the case
of homogeneous geodesic manifolds.

Let us recall, that  the \textit{Hausdorff distance} $d_H(A,B)$ between 
two bounded subsets of an arbitrary metric space $M$ is
the infimum of positive numbers $r$, such that $A$ is contained 
in the $r$-neighbourhood of the set $B$, and
$B$ is contained in the $r$-neighbourhood of the set $A.$ The 
pair $(K(M),d_H)$  is a metric space where $K(M)$ is the family of all closed
bounded subsets of the metric space $M$. It is complete if the 
space $M$ is complete \cite{Kur}.

\begin{definition}
\label{GH}
\textit{The Gromov-Hausdorff distance} $d_{GH}(A,B)$ between compact 
metric spaces is defined as the infimum of all distances $d_H(f(A),g(B))$ 
for all metric spaces $M$, and for all isometric embeddings 
$f\colon A\rightarrow M,$ and $g: B\rightarrow M.$ By definition, a 
sequence $((X_n,x_n),\rho_n)$ of finitely compact complete spaces with
metrics  $\rho_n$ and chosen points $x_n$ 
\textit{Gromov-Hausdorff-converges} to a similar space $((X,x),\rho),$
if for any number $r>0,$ the distance 
$$d_{GH}(B_{X_n}(x_n,r),B_X(x,r))\rightarrow 0,$$ as $n\rightarrow +\infty.$
\end{definition}

\begin{definition}
\label{dist}
The distance $d_{BGH}$ between finitely compact metric spaces with 
chosen points $(X,x)$ and $(Y,y)$ is equal by definition to
\begin{equation}
\label{BGH}
d_{BGH}((X,x),(Y,y))=\sup_{r\geq 0}d_{GH}(B_{X}(x,r),B_{Y}(y,r))e^{-r}.
\end{equation}
\end{definition}

As a consequence of S.~E.~Cohn-Vossen theorem, cited above, this 
definition is applicable to locally compact complete
spaces with inner metric, in particular, to locally compact 
homogeneous spaces with inner metric. It is clear that
in the latter case the distance $d_{BGH}$ does not depend on 
the choice of points $x\in X$ and $y\in Y$.  Let $\Sigma$
and $\Theta$ be respectively the classes of all finitely compact 
metric spaces and locally compact complete inner
metric spaces with chosen points, and let $\Omega$ be a class 
of all locally compact homogeneous spaces with inner metric.

\begin{theorem}
\label{space}
The pair $(\Sigma,d_{BGH})$ is a complete metric space. The 
convergence of sequences in this metric space is equivalent to 
the Gromov-Hausdorff convergence. So $\Theta$ and $\Omega$ are 
closed subspaces of $(\Sigma,d_{BGH}).$ Moreover, 
the subclass $\mathfrak{M}$ of homogeneous manifolds with 
inner metric is everywhere dense in $(\Omega,d_{BGH}).$ 
\end{theorem}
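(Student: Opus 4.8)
The plan is to establish the four assertions separately, in each case reducing to the known fact that $d_{GH}$ is a complete metric on the class of compact metric spaces. First I would check that $d_{BGH}$ of \eqref{BGH} is a metric. The supremum is finite: from the product correspondence $A\times B$ one gets $d_{GH}(A,B)\le\frac12\max(\operatorname{diam}A,\operatorname{diam}B)$, and since $\operatorname{diam}B_X(x,r)\le2r$ this gives $d_{GH}(B_X(x,r),B_Y(y,r))\le r$, so the weighted term is bounded by $re^{-r}$. Symmetry is inherited from $d_{GH}$, and the triangle inequality follows by applying that of $d_{GH}$ for each fixed $r$ and taking suprema. For separation, $d_{BGH}=0$ forces $d_{GH}(B_X(x,r),B_Y(y,r))=0$ for every $r$, hence an isometry of each pair of balls; a diagonal argument over a sequence $r_k\to\infty$, using compactness and the nesting of the balls and tracking the marked points, then produces a pointed isometry $(X,x)\to(Y,y)$.

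The equivalence of convergences splits into two implications. The forward one is immediate, since for each fixed $r$ one has $d_{GH}(B_{X_n}(x_n,r),B_X(x,r))\le e^{r}d_{BGH}((X_n,x_n),(X,x))$, so $d_{BGH}$-convergence yields the ballwise convergence of Definition~\ref{GH}. The converse is the crux. Given $\varepsilon>0$ I would choose $R$ with $re^{-r}<\varepsilon$ for $r>R$, which controls the supremum on $(R,\infty)$ for free; on the compact interval $[0,R]$ one must promote the pointwise-in-$r$ convergence to uniform convergence. \emph{This uniformity in $r$ is the main obstacle.} I would obtain it from the radial estimate $d_{GH}(B_X(x,r),B_X(x,r'))\le|r-r'|$, which holds in any inner metric space because a shortest arc pushes each point of the larger ball into the smaller one through a distance at most $|r-r'|$; this $1$-Lipschitz dependence on the radius makes the functions $r\mapsto d_{GH}(B_{X_n}(x_n,r),B_X(x,r))$ equicontinuous, so convergence on a fine finite net of $[0,R]$ upgrades to uniform convergence. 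This is the only step requiring more than the pointwise hypothesis of Definition~\ref{GH}, and it is exactly where the inner metric structure of $\Theta$ and $\Omega$ enters.

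For completeness, let $((X_n,x_n))$ be $d_{BGH}$-Cauchy. For each fixed $r$ the balls $B_{X_n}(x_n,r)$ are $d_{GH}$-Cauchy compact spaces, hence converge to a compact space $K_r$. The decisive technical point is consistency of the family $\{K_r\}_{r>0}$: one shows $K_r$ is isometric to the closed $r$-ball about the marked point of $K_{r'}$ whenever $r<r'$, so the $K_r$ glue to a single pointed finitely compact space $(X,x)$ with $B_X(x,r)\cong K_r$, and the estimate of the previous step gives $d_{BGH}((X_n,x_n),(X,x))\to0$. Closedness of $\Theta$ holds because a Gromov--Hausdorff limit of geodesic spaces is geodesic (midpoints of pairs converge to midpoints), and closedness of $\Omega$ because a pointed limit of homogeneous spaces is homogeneous: any two points of the limit are approximated by points of $X_n$ interchanged by motions, whose limit acts transitively on the limit space.

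Finally, for density of $\mathfrak{M}$ in $\Omega$ I would apply Theorem~\ref{invlim} directly. Given $(M,\rho)\in\Omega$ with marked point $o$, write it as the inverse metric limit of homogeneous geodesic \emph{manifolds} $M_n=(G/N_n)/(HN_n/N_n)$ with proper submetries $p_n\colon(M,\rho)\to(M_n,\rho_n)$ and $\rho_n\circ(p_n\times p_n)\nearrow\rho$ uniformly, say with global error $\varepsilon_n\to0$. Because $p_n$ is a submetry, $p_n(B_M(o,r))=B_{M_n}(p_n(o),r)$ by Definition~\ref{subm}, so the graph of $p_n$ restricted to $B_M(o,r)$ is a correspondence whose distortion equals $\sup_{a,b\in B_M(o,r)}|\rho(a,b)-\rho_n(p_n(a),p_n(b))|\le\varepsilon_n$. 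Hence $d_{GH}(B_M(o,r),B_{M_n}(p_n(o),r))\le\varepsilon_n/2$ \emph{uniformly in} $r$, whence $d_{BGH}((M,o),(M_n,p_n(o)))\le\varepsilon_n/2\to0$. Since each $M_n\in\mathfrak{M}$, density follows; note that the uniformity in $r$ which was the obstacle in the convergence equivalence is here furnished automatically by the global uniform convergence of the metrics in Theorem~\ref{invlim}.
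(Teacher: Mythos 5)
You should first know that the paper itself contains no proof of Theorem~\ref{space} --- it is a survey, and this theorem is stated without even a citation --- so your argument can only be measured on its own merits. Its architecture is the natural one and most of the load-bearing steps are right: the tail bound $d_{GH}(B_X(x,r),B_Y(y,r))\le r$ coming from $\operatorname{diam}B(x,r)\le 2r$; the forward implication via $d_{GH}(B_{X_n}(x_n,r),B_X(x,r))\le e^{r}d_{BGH}$; equicontinuity in $r$ of $\varphi_n(r)=d_{GH}(B_{X_n}(x_n,r),B_X(x,r))$, i.e.\ $|\varphi_n(r)-\varphi_n(r')|\le 2|r-r'|$, from the radial estimate $d_{GH}(B_X(x,r),B_X(x,r'))\le|r-r'|$ valid in geodesic spaces; and above all the density step, where the graph of the submetry $p_n$ restricted to $B_M(o,r)$ is a correspondence onto $B_{M_n}(p_n(o),r)$ (by Definition~\ref{subm}) with distortion at most $\varepsilon_n$ \emph{uniformly in} $r$ --- this is surely the intended application of Theorem~\ref{invlim}. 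You are also right, and it is worth making explicit, that the inner-metric structure in the converse implication is not cosmetic: on all of $\Sigma$ the equivalence literally fails. Take $X_n=\{0,1-1/n\}$ and $X=\{0,1\}$, both marked at $0$: for every fixed $r$ one has $d_{GH}(B_{X_n}(0,r),B_X(0,r))\to 0$, yet $d_{BGH}((X_n,0),(X,0))\ge \tfrac{1}{2}(1-1/n)\,e^{-(1-1/n)}\to \tfrac{1}{2e}$, because at the moving radius $r=1-1/n$ the ball of $X_n$ is two points while that of $X$ is one. So the theorem's second sentence can only be read on $\Theta$ and $\Omega$ (which is all its ``So \dots'' clause requires), and your proof correctly establishes exactly that restricted statement.

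Two genuine gaps remain. The serious one is separation, i.e.\ that $d_{BGH}=0$ forces a pointed isometry. From $d_{BGH}((X,x),(Y,y))=0$ you obtain for each $k$ an \emph{unpointed} surjective isometry $f_k\colon B_X(x,k)\to B_Y(y,k)$, and your ``diagonal argument \dots tracking the marked points'' has nothing to track: the only a priori control is $\rho(f_k(x),y)\le k$, so the images of the base point may drift without bound; moreover the restriction of $f_{k+1}$ to $B_X(x,k)$ need not map into $B_Y(y,k)$, so the family has no fixed compact target at each level and the Arzel\`a--Ascoli diagonal extraction does not apply as stated. One must either work inside $\Omega$, where homogeneity makes the base point irrelevant and the argument is clean, or prove separately that the ball isometries can be corrected so as to displace the base points a bounded amount --- a genuine argument, not a formality. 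The second, smaller, gap is in completeness: your ``decisive technical point'', that $K_r$ is isometric to the closed $r$-ball of $K_{r'}$ about its marked point, is asserted rather than proved, and it is exactly the delicate spot, since closed balls are not continuous under Gromov--Hausdorff limits (points at distance exactly $r$ can appear or disappear in the limit, as in the two-point example above). What rules such defects out is the uniform-in-$r$ Cauchy control $d_{GH}(B_{X_n}(x_n,r),B_{X_m}(x_m,r))\le e^{r}d_{BGH}((X_n,x_n),(X_m,x_m))$; this has to be exploited explicitly --- for instance by first obtaining the embeddings $K_r\to K_{r'}$ as limits of the inclusions $B_{X_n}(x_n,r)\subset B_{X_n}(x_n,r')$ along a dense set of radii and then gluing monotonically --- before the limit space $(X,x)$ can be assembled and $d_{BGH}((X_n,x_n),(X,x))\to 0$ verified.
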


\section{Homogeneous manifolds with inner metric}

\begin{theorem}
\label{man}
The following statements for locally compact homogeneous space with inner metric 
$(M,\rho)$ are equivalent:

(1) $M$ is a (connected) topological manifold;

(2) $M$ has finite topological dimension;

(3) $M$ is locally contractible;

(4) $(M,\rho)$ is isometric to $(G/H,d),$ where $G$ is a connected Lie group,
$H$ is a compact Lie subgroup of $G$, and $d$ is some inner metric 
on $G/H,$ invariant relative to the canonical left action of $G$ on $G/H.$ 
\end{theorem}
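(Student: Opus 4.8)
The plan is to establish the single cycle $(4)\Rightarrow(1)\Rightarrow(3)\Rightarrow(2)\Rightarrow(4)$, working throughout with the identification $(M,\rho)\cong G/H$ provided by Theorem~\ref{BP}, so that $G$ is connected, locally compact, first countable, and $H$ is compact, together with the inverse-limit description of Theorem~\ref{invlim}. The two short arrows I would dispose of immediately. For $(4)\Rightarrow(1)$: when $G$ is a connected Lie group and $H$ a compact Lie subgroup, $G/H$ is a smooth, hence topological, manifold, connected since $G$ is, and the metric is irrelevant. For $(1)\Rightarrow(3)$: a topological manifold is locally Euclidean and Euclidean balls are contractible, so $M$ is locally contractible.

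The first substantial arrow is $(2)\Rightarrow(4)$. Write $M=\varprojlim M_n$ as in Theorem~\ref{invlim}, with $M_n=(G/N_n)/(HN_n/N_n)$ homogeneous geodesic \emph{manifolds}, proper smooth submetries $p_{nm}\colon M_m\to M_n$ ($n\le m$) and $p_n\colon M\to M_n$, and---this is the decisive feature---connected fibers. Put $d_n=\dim M_n$. Since each $p_{nm}$ is a surjective submersion, $d_n$ is non-decreasing; since $p_n$ is a surjective submetry, $\dim M\ge d_n$ for every $n$. Hence if $\dim M<\infty$ the sequence $(d_n)$ is bounded, so eventually constant, say $d_n=d$ for $n\ge n_0$. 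For $m\ge n\ge n_0$ the submersion $p_{nm}$ then has $0$-dimensional fibers; being connected each fiber is a point, so $p_{nm}$ is an injective submersion between equidimensional manifolds, that is an open embedding, and being a surjective submetry it is a bijective isometry. Thus $M$ is isometric to $M_{n_0}=(G/N_{n_0})/(HN_{n_0}/N_{n_0})$; by Theorem~\ref{ya} the group $G/N_{n_0}$ is a connected Lie group and $HN_{n_0}/N_{n_0}$ a compact, hence compact Lie, subgroup of it, which is precisely $(4)$.

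The main obstacle is $(3)\Rightarrow(2)$: local contractibility must force finite topological dimension. Local contractibility alone does not do this (the Hilbert cube is locally contractible and infinite-dimensional), so the homogeneous group structure has to be used in an essential way. I would argue by contradiction: if $\dim M=\infty$ then $d_n\to\infty$, so for each fixed $n$ the fibers of $p_{nm}$ are connected \emph{compact} manifolds (they are orbits of the compact group $N_n/N_m$) whose dimension $d_m-d_n$ grows without bound as $m\to\infty$. Such a fiber carries a nonzero top-dimensional \v Cech cohomology class. Working inside a closed ball $B(x,r)$---compact by the Cohn-Vossen theorem and itself an inverse limit of the compact sets $p_n(B(x,r))$---I would use the continuity of \v Cech cohomology under inverse limits, $\check{H}^{\ast}(\varprojlim M_n)=\varinjlim\check{H}^{\ast}(M_n)$, to show that these accumulating fiber classes survive in every neighbourhood of $x$. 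This contradicts local contractibility, under which small contractible neighbourhoods have vanishing reduced cohomology and \v Cech and singular theories agree. Making this accumulation precise---pinning the classes to arbitrarily small neighbourhoods and verifying they do not die in the direct limit---is the delicate technical core, and most of the effort will go there.

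An alternative for the hard arrow passes through the acting group: by Theorem~\ref{glob} $G$ is locally isomorphic to $K\times L$ with $K$ compact connected and $L$ a Lie group, and for compact connected groups local contractibility, local connectedness, finite dimension, and being a Lie group are all equivalent; one would then conclude $G$ is Lie and $M$ a manifold. The snag is transferring local contractibility from $M=G/H$ to $K$ through the quotient $G\to G/H$ with compact fiber $H$, since the needed local triviality is not available before finite-dimensionality is known; this is why I would favour the intrinsic cohomological route above.
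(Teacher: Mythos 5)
Your trivial arrows and your treatment of $(2)\Rightarrow(4)$ essentially reproduce the paper's own route: the paper likewise deduces (4) from finite dimensionality via Theorems \ref{BP} and \ref{invlim} (stabilization of the inverse system, equivalently no small normal subgroups in $G$) followed by Theorem \ref{ya}. One caveat in your version: the inequality $\dim M\ge d_n$ is not justified by ``$p_n$ is a surjective submetry'' alone. Open continuous surjections between compacta can raise topological dimension (Kolmogorov's examples), and the $1$-Lipschitz property of a submetry only controls Hausdorff dimension, which at this stage of the theory you cannot compare with the topological dimension of $M$ (that comparison is exactly what Theorem \ref{fins} later establishes, using Theorem \ref{man}). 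The clean fix is group-theoretic: $p_n$ is the coset projection $G/H\rightarrow G/HN_n$ with compact fiber $HN_n/H$, and dimension is additive for coset spaces of locally compact groups, $\dim G/H=\dim G/HN_n+\dim HN_n/H$, which gives $d_n\le\dim M$.

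The genuine gap is the arrow out of (3), and you have in effect flagged it yourself. The paper does not prove ``locally contractible $\Rightarrow$ manifold'' intrinsically either: it invokes Szenthe's claim \cite{Sze} --- a $\sigma$-compact locally compact group acting transitively, properly and faithfully on a locally contractible space forces the space to be a manifold and the group to be Lie --- while explicitly warning that Szenthe's own proof contains a serious gap discovered by Antonyan \cite{Ant} in 2011, with correct proofs due to Antonyan--Dobrowolski \cite{AD} and Hofmann--Kramer \cite{HK}. Your \v{C}ech-cohomology program is an attempt to reprove precisely this hard theorem, and it is left open at its crux. Concretely: continuity of \v{C}ech cohomology gives $\check{H}^{\ast}(\varprojlim B_n)\cong\varinjlim\check{H}^{\ast}(B_n)$ for the compact balls, but your nonvanishing top-dimensional classes live on the \emph{fibers} of $p_{nm}$, not on the balls $B_{M_n}(x_n,r)$, and nothing in the sketch shows these fiber classes inject into the cohomology of the balls (let alone of arbitrarily small neighbourhoods of $x$) or survive the bonding maps into the direct limit. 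Since local contractibility does not imply finite dimension for general compacta --- the Hilbert cube is locally contractible, infinite-dimensional, and even topologically homogeneous by Keller's theorem --- any correct argument must use the isometric group action essentially, which your sketch does only nominally (fibers as orbits). Your alternative route via Theorem \ref{glob} is actually closer to how Hofmann--Kramer proceed (reduction to the compact factor $K$, where local contractibility is equivalent to being Lie), and you correctly identify the sticking point --- transporting local contractibility from $M=G/H$ to the group without prior local triviality --- but you do not resolve it. As written, your cycle is broken at $(3)\Rightarrow(2)$: either cite \cite{AD}, \cite{HK} for this implication, as the paper does, or expect that completing your sketch is research-level work; it is exactly where Szenthe's 1974 argument failed unnoticed for decades.
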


Let us give some explanations. Evidently, (1) implies (2) and (3); (4) implies
other statements. Now (2), theorems \ref{BP} and \ref{invlim} imply 
that $M=G/H,$ where $G$ is a locally compact, connected topological 
group such that some neighborhood $U$ of $e$ contains no nontrivial 
normal subgroup, and H is a compact subgroup of $G$. Then theorem 
\ref{ya} implies that $G$ is a connected Lie group,
$H$ is a compact Lie subgroup of $G,$ which proves (4).
  
The statement (3) together with theorem \ref{BP} would imply the 
statement (4) by J.~Szenthe's claim in \cite{Sze}:
\textit{let a $\sigma$-compact locally compact group $G,$ with 
a compact quotient $G/G_0,$ acts continuously (and properly) as 
a transitive and faithful transformation group on a locally 
contractible space $X.$ Then $X$ is a manifold and $G$ is a Lie group.}

However, it was discovered in 2011 by S.~Antonyan \cite{Ant}, that 
Szenthe's proof of this claim contains a serious gap.
Independently  Szenthe's claim was proved by S.~Antonyan and 
T.~Dobrowolski \cite{AD}, by K.~H.~Hoffmann and L.~Kramer
\cite{HK}, see also the book by K.H.Hoffmann and S.A.Morris 
\cite{HM}, pp. 592--605. 

Theorem \ref{man} gives topological characterization of homogeneous manifolds
with inner metric. Now we shall describe their metric structure.

Let $M=G/H$ be the quotient manifold of a connected Lie group $G$ by its
compact Lie subgroup $H;$ $\mathfrak{g}=G_e,$ $\mathfrak{h}=H_e$ be Lie algebras
of Lie groups $G,$ $H.$ Let us set the following objects:

(a1) $L_e$ is $Ad(H)$-invariant vector subspace of $\mathfrak{g}$ such that
$\mathfrak{h}\subset L_e$ and $\mathfrak{g}$ is the least Lie subalgebra of
$\mathfrak{g}$ which contains $L_e;$

(a2) $D_H=dp(e)(L_0),$ where $p: G\rightarrow G/H$ is the canonical projection and
$dp$ is its differential;

(a3) $F_H$ is a norm on $D_H$ which is invariant relative to the 
(linear) isotropy group of $G/H$ at $H\in G/H;$

(a4) $D$ is $G$-invariant distribution on $G/H$ such that $D(H)=D_H;$

(a5) $F$ is $G$-invariant norm on the distribution $D$ such that $F(H)=F_H.$ 

\begin{theorem}
\label{mman1}
\cite{Ber-89.2}, \cite{Ber-88},\cite{Ber-89.1}
Let $M=G/H$ be the quotient space of a connected Lie group $G$ by its compact Lie 
subgroup $H,$ $(D,F)$ is a pair with conditions (a1)---(a5). Then the formula
\begin{equation}
\label{dist}
d_c(x,y)=\inf _c \int_0^1 F(\dot{c}(t))dt,
\end{equation}
where $c=c(t),$ $0\leq t\leq 1,$ are arbitrary piecewise smooth paths in
$G/H,$ tangent to distribution $D$ and joining points $x$ and $y$ from $G/H,$
defines some $G$-invariant geodesic metric $d_c$ on $G/H$ (compatible with the
standard topology on $G/H$). 
\end{theorem}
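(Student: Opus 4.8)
The plan is to verify in turn that $d_c$ is finite, that it satisfies the metric axioms and is $G$-invariant, that it induces the manifold topology, and finally that it is geodesic, the last step reducing to the Cohn--Vossen theorem quoted above. First I would record that $D$ is a well-defined smooth $G$-invariant distribution: by (a1) the subspace $D_H=dp(e)(L_e)$ is invariant under the linear isotropy action of $H$ (the quotient of $\Ad(H)$), so spreading it by the $G$-action is unambiguous, and the same applies to $F$ by (a3)--(a5). The decisive structural fact is that $D$ is bracket-generating: the left-invariant fields on $G$ attached to elements of $L_e$ project under $dp$ to smooth sections of $D$, and since $L_e$ Lie-generates $\mathfrak{g}$ by (a1), their iterated brackets span $T_x(G/H)$ at every point. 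Hence the Rashevsky--Chow theorem applies and any two points of $G/H$ are joined by a piecewise smooth horizontal path, so that $d_c(x,y)<\infty$ for all $x,y$.

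Next I would check the metric axioms. Nonnegativity is immediate from $F\ge 0$; symmetry follows by reversing the parameter of a horizontal path together with the absolute homogeneity $F(-v)=F(v)$ of the norm (a3); the triangle inequality follows by concatenating horizontal paths and passing to the infimum. $G$-invariance is clear because, by (a4)--(a5), every $g\in G$ carries horizontal paths to horizontal paths of equal $F$-length, whence $d_c(gx,gy)=d_c(x,y)$. The one nontrivial axiom is nondegeneracy. Here I would fix a $G$-invariant Riemannian metric $g_0$ on $G/H$ (which exists since $H$ is compact) and use that on the compact unit sphere of $(D_H,g_0)$ the continuous function $F$ attains a positive minimum $c>0$; by $G$-invariance this gives $F(v)\ge c\,\|v\|_{g_0}$ for all $v\in D$. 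Consequently the $F$-length of any horizontal path from $x$ to $y$ is at least $c$ times its Riemannian length, which is at least $c\,\rho_{g_0}(x,y)$, so $d_c(x,y)\ge c\,\rho_{g_0}(x,y)>0$ whenever $x\ne y$. Thus $d_c$ is a genuine $G$-invariant metric.

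The inequality $d_c\ge c\,\rho_{g_0}$ also shows that the identity map $(G/H,d_c)\to(G/H,\rho_{g_0})$ is Lipschitz, so the $d_c$-topology is at least as fine as the manifold topology. For the opposite inclusion I would invoke the quantitative refinement of Rashevsky--Chow, the ball--box theorem: in a neighbourhood of $H$ a point at small $g_0$-distance $\varepsilon$ from $H$ can be joined to $H$ by a horizontal path of $F$-length at most $C\,\varepsilon^{1/r}$, where $r$ is the step of the distribution, and by $G$-invariance this local Hölder estimate holds uniformly at every point. Hence $\rho_{g_0}$-convergence forces $d_c$-convergence, the two topologies coincide, and $d_c$ is compatible with the standard topology of $G/H$. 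I expect this quantitative step to be the \emph{main obstacle}, since the elementary Riemannian comparison bounds $d_c$ only from below and gives no control in the horizontal directions' higher-order behaviour.

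It remains to see that $d_c$ is geodesic. By the previous paragraph $(G/H,d_c)$ is a locally compact metric space; it is homogeneous under the isometric $G$-action and therefore, as recalled in the introduction, metrically complete. Moreover $d_c$ is an inner metric, since by construction it is the intrinsic metric of the length structure assigning to each horizontal curve its $F$-length, and the $d_c$-induced length of such a curve coincides with that $F$-length. Applying the Cohn--Vossen theorem to the locally compact complete inner metric space $(G/H,d_c)$ shows that it is geodesic, which completes the proof.
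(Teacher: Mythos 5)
Your proposal is correct and takes essentially the route the paper indicates: the paper itself offers no detailed proof of Theorem~\ref{mman1} beyond the remark that (a1), (a2), (a4) make $D$ completely nonholonomic so that Rashevsky--Chow yields finiteness (deferring the rest to the cited original papers), and your steps --- Chow for finiteness, averaging/comparison with an invariant Riemannian metric for nondegeneracy, a ball--box (or, more economically, the open-mapping content of Chow's own proof) estimate for compatibility of topologies, and the length-structure plus Cohn--Vossen argument for geodesicity --- are the standard completion of that sketch. One small repair is needed in your bracket-generating step: an individual left-invariant field $X^L$ with $X\in L_e$ is \emph{not} $p$-projectable, since $dp\bigl(X^L(gh)\bigr)=dp\bigl((\Ad(h)X)^L(g)\bigr)$; instead, use that the left-invariant distribution on $G$ spanned by $L_e$ is well defined, bracket-generating on $G$ because $L_e$ Lie-generates $\mathfrak{g}$, contains $\ker dp$ because $\mathfrak{h}\subset L_e$, and is mapped by $dp$ onto $D$, so horizontal accessibility on $G$ projects to horizontal accessibility on $G/H$ and the rest of your argument goes through unchanged.
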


\begin{remark}
Conditions (a1), (a2), and (a4) imply that the distribution $D$ 
from theorem \ref{mman1} is \textit{completely nonholonomic} \cite{Rash}. Therefore
any two points $x$ and $y$ from $G/H$ can be joined by some piecewise smooth
path by Rashevsky-Chow theorem \cite{Rash}, \cite{Ch}, so $d_c(x,y)$ is finite.
If $D = TM$ (respectively, $D\neq TM$) then $d_c$ is said to be 
(sub-)Finslerian and (sub-)Riemannian if additionally $F_H$ is an Euclidean
norm on $D_H.$ Note that a norm $\|\cdot\|$ on a vector space $V$ is Euclidean
if and only if 
$$\|a+b\|^2+ \|a-b\|^2=2(\|a\|^2+\|b\|^2)\quad\mbox{for every}\quad a,b\in V.$$
\end{remark}

\begin{theorem}
\label{mman}
\cite{Ber-88},\cite{Ber-89.1}
Every $G$-invariant inner metric $\rho$ on a homogeneous manifold 
$G/H$ of connected Lie group $G$ by its compact subgroup $H$ is 
sub-Finslerian or Finslerian. In addition, the Lie group $G$ admits 
a left-invariant sub-Finslerian or Finslerian (sub-Riemannian or 
Riemannian if $\rho$ is sub-Riemannian or Riemannian) metric $\rho_0$ 
such that the canonical projection $p:(G,\rho_0)\rightarrow (G/H,\rho)$ 
is submetry. 
\end{theorem}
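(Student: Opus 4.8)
The plan is to reconstruct the infinitesimal data $(D,F)$ of conditions (a1)--(a5) directly from the given metric $\rho$, to prove that $\rho$ coincides with the sub-Finslerian metric $d_c$ produced from that data by Theorem \ref{mman1}, and finally to build the left-invariant metric $\rho_0$ on $G$ and verify the submetry property by hand. By Theorem \ref{man} we may assume from the outset that $M=G/H$ with $G$ a connected Lie group, $H$ a compact Lie subgroup, and $\rho$ a $G$-invariant inner metric (hence geodesic, by Cohn--Vossen). Write $o=eH$ and $p\colon G\to G/H$ for the projection.

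First I would attach to each $X\in\mathfrak g$ the orbit curve $\gamma_X(t)=\exp(tX)\cdot o$ and the function $f_X(t)=\rho(\gamma_X(t),o)$. Left-invariance of $\rho$ gives $\rho(\gamma_X(t+s),\gamma_X(s))=f_X(t)$, so $f_X$ is subadditive on $[0,\infty)$ and
\[
F_H(dp(e)X):=\lim_{t\to 0^+}\frac{f_X(t)}{t}
\]
exists and depends only on the tangent vector $v=dp(e)X\in T_o(G/H)$. Positive homogeneity is immediate from reparametrization, and symmetry of the metric forces $f_{-X}=f_X$, hence $F_H(-v)=F_H(v)$. The first genuinely nontrivial point is convexity, $F_H(v+w)\le F_H(v)+F_H(w)$: I would obtain it by identifying $F_H$ with the norm carried by the metric tangent cone of $(G/H,\rho)$ at $o$, which by a blow-up (self-similarity) argument for the homogeneous space is a normed vector space in the Finslerian directions, and the length element of a normed space is automatically a norm. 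Setting $D_H:=\{v:F_H(v)<\infty\}$ then makes $D_H$ a linear subspace and $F_H$ a (symmetric) norm on it. Because $\rho$ is $G$-invariant and $H$ fixes $o$, the whole construction is invariant under the linear isotropy action of $H$, which is exactly (a3), and $L_e:=dp(e)^{-1}(D_H)$ is an $\Ad(H)$-invariant subspace containing $\mathfrak h$. That $L_e$ bracket-generates $\mathfrak g$ (condition (a1)) follows from connectedness of $G/H$ together with finiteness of $\rho$: were the generated subalgebra proper, points off the corresponding orbit would lie at infinite $\rho$-distance. Spreading $D_H,F_H$ by the $G$-action (consistently, thanks to $\Ad(H)$-invariance) yields the $G$-invariant pair $(D,F)$ of (a4)--(a5), and Theorem \ref{mman1} furnishes the $G$-invariant geodesic metric $d_c$.

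The crux, and the step I expect to be the main obstacle, is the identity $\rho=d_c$, which I would establish by showing the two inner metrics have the same length functional. For $d_c\le\rho$ it suffices that every $\rho$-rectifiable curve is $D$-horizontal with $F$-length at most its $\rho$-length: a curve of finite $\rho$-length has finite metric speed almost everywhere, and at a point of differentiability that speed equals $F$ of the tangent, by the one-parameter-subgroup computation above after translating the basepoint to $o$; in particular the tangent lies in $D$. For $\rho\le d_c$ one runs the same comparison along a $d_c$-admissible curve. The real difficulty is sub-Finslerian regularity: one must know that $\rho$-shortest arcs are absolutely continuous as curves in the manifold and differentiable almost everywhere, and that the pointwise identification ``$\rho$-metric speed $=F(\dot c)$'' holds for \emph{arbitrary} horizontal curves, not merely for orbit curves of one-parameter subgroups. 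I would handle this by lifting shortest arcs to horizontal curves of the control system defined by $D$ (using the Rashevsky--Chow theorem invoked in the Remark) and by proving a uniform first-order comparison between an arbitrary horizontal curve and the exponential curve with the same initial velocity; this is where the bulk of the work lies.

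Finally, for the statement about $G$, let $\mathfrak m$ be an $\Ad(H)$-invariant complement of $\mathfrak h$ in $L_e$ (available since $H$ is compact), identify $\mathfrak m\cong D_H$ via $dp(e)$, and define $\rho_0$ as the left-invariant sub-Finslerian metric of the left-invariant distribution with value $L_e$ at $e$, equipped with the $\Ad(H)$-invariant norm $\widetilde F_e(m+Y)=F_H(dp(e)m)+\|Y\|_{\mathfrak h}$ for $m\in\mathfrak m$, $Y\in\mathfrak h$. Bracket-generation of $L_e$ makes $\rho_0$ a finite left-invariant geodesic metric, and by construction $F_H$ is the quotient norm of $\widetilde F_e$ on $L_e/\mathfrak h$, with the infimum attained on the pure $\mathfrak m$-representative. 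That $p\colon(G,\rho_0)\to(G/H,\rho)$ is a submetry then splits into two inclusions: $p$ is $1$-Lipschitz because projecting a horizontal curve does not increase $F$-length, giving $p(B_G(e,r))\subset B_{G/H}(o,r)$; conversely a $\rho$-shortest arc from $o$ admits a horizontal lift into the $\mathfrak m$-directions starting at $e$ whose $\widetilde F$-length equals its $F$-length, giving the reverse inclusion. Equality at every $g$ then follows from $G$-equivariance of $p$ with left-invariance of $\rho_0$ and $G$-invariance of $\rho$, which is precisely Definition \ref{subm}. In the Riemannian case one takes $F_H$ Euclidean and chooses $\|\cdot\|_{\mathfrak h}$ Euclidean as well, so that $\rho_0$ is sub-Riemannian, and Riemannian when $D=TM$.
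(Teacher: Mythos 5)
Note first that the paper you were asked to match contains no proof of Theorem~\ref{mman}: it is stated with citations to \cite{Ber-88}, \cite{Ber-89.1}, so your proposal can only be judged against what a complete argument requires. Measured that way, your skeleton (extract $(D_H,F_H)$ infinitesimally from $\rho$, prove $\rho=d_c$, then lift to $G$) is the right shape, but it has genuine gaps at exactly the decisive points, and they all stem from one phenomenon you never confront: for a nonholonomic invariant metric, deviations that are small of second order in the manifold sense need \emph{not} be metrically $o(t)$. Concretely: (i) well-definedness of $F_H$ on $T_o(G/H)$ is asserted, not proved. If $dp(e)X=dp(e)X'$ then $X'=X+Y$ with $Y\in\mathfrak h$, and BCH gives $\exp(t(X+Y))o=\exp(tX)\exp(W_t)o$ with $W_t=O(t^2)$; a priori you only know $\rho(\exp(W_t)o,o)\to 0$ by continuity, with no rate, and even for a genuine sub-Finsler metric of step $m\geq 2$ this term is of size $t^{2/m}$, which is \emph{not} $o(t)$. (ii) The same defect sinks the mechanism you propose for the step you yourself call the crux: the ``uniform first-order comparison between an arbitrary horizontal curve and the exponential curve with the same initial velocity'' fails as stated, because at a differentiability point the deviation $c(t+h)$ from $\exp(hX)c(t)$ is $o(h)$ in the manifold but only $o(h^{1/m})$ metrically, which swamps the first-order term $F(\dot c(t))h$. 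A complete proof needs a different engine here — e.g.\ the exact inequality $\rho(o,\exp(tX)o)\leq t\,F_H(X)$ for \emph{all} $t>0$ (subadditivity gives $\lim_{t\to 0^+}f_X(t)/t=\sup_{t>0}f_X(t)/t$), chaining/product approximations for the upper bound, and quantitative ball-box-type lower estimates for the reverse — none of which you set up. (iii) You also never establish the local lower bound $\rho\geq c\,d_{\mathrm{Riem}}$; without it, ``finite metric speed almost everywhere'' does not yield a.e.\ differentiability of $\rho$-rectifiable curves as manifold curves, so Rademacher is not available.

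Separately, your route to convexity of $F_H$ via the metric tangent cone is circular within this paper's logical architecture: Theorem~\ref{tc} presupposes that $(M,\rho)$ is already of the form $(G/H,d_c)$ from Theorem~\ref{mman1} — i.e.\ the conclusion of Theorem~\ref{mman} — and for an arbitrary invariant inner metric the precompactness of the blow-up family $((M,x),\alpha\rho)$ is itself a nontrivial unproved claim (no doubling bound is available a priori). An elementary argument avoids cones entirely: from $f_X(t)\leq tF_H(X)$, a Lie--Trotter zig-zag (alternate $\exp(\lambda tX/n)$ and $\exp((1-\lambda)tY/n)$, $n$ times) bounds $f_{\lambda X+(1-\lambda)Y}(t)$ by $t(\lambda F_H(X)+(1-\lambda)F_H(Y))$ plus an error that vanishes as $n\to\infty$ by mere continuity of $\rho$ in the manifold topology, since the correction terms only need to go to zero for \emph{fixed} $t$. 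By contrast, your final section is essentially sound modulo the above: the quotient-norm construction of $\rho_0$ with an $\Ad(H)$-invariant complement $\mathfrak m$ and the horizontal-lift argument for the submetry property are standard and correct in outline (one small repair: in the Riemannian case take the $\ell^2$-combination $(F_H^2+\|\cdot\|_{\mathfrak h}^2)^{1/2}$ rather than the sum, so that $\widetilde F_e$ is actually Euclidean). But since parts (i)--(iii) carry the entire weight of the theorem, the proposal as written is a plan with the hard analysis missing, not a proof.
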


Using this theorem, it is not difficult to prove the following addition to theorem
\ref{space}.

\begin{theorem}
\label{add}
\cite{BGorb-14}
The class $\mathfrak{LG}$ of connected Lie groups with left-invariant Finslerian
metric is everywhere dense in $(\Omega,d_{BGH}).$ 
\end{theorem}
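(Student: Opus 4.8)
The plan is to combine Theorem~\ref{space} with Theorem~\ref{mman}, using the transitivity of closure: since Theorem~\ref{space} gives $\Omega=\overline{\mathfrak{M}}$ in $(\Sigma,d_{BGH})$, it suffices to prove $\mathfrak{M}\subseteq\overline{\mathfrak{LG}}$, for then $\Omega=\overline{\mathfrak{M}}\subseteq\overline{\mathfrak{LG}}$. So I fix an arbitrary homogeneous manifold with inner metric $(M,\rho)\in\mathfrak{M}$, which by Theorem~\ref{man} we may write as $(G/H,\rho)$ with $G$ a connected Lie group and $H$ a compact Lie subgroup, and by Theorems~\ref{mman1} and \ref{mman} we may assume $\rho$ is the (sub-)Finslerian metric $d_c$ attached to data $(D,F)$ satisfying (a1)--(a5). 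The goal is then to exhibit $(G/H,\rho)$ as a $d_{BGH}$-limit of connected Lie groups carrying left-invariant \emph{Finslerian} metrics.

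The construction I would use is a two-parameter family of left-invariant Finslerian metrics on the same Lie group $G$, obtained by simultaneously \emph{fattening the distribution} and \emph{collapsing the fibre}. Since $H$ is compact, choose an $\Ad(H)$-invariant reductive decomposition $\mathfrak{g}=\mathfrak{h}\oplus\mathfrak{m}$, so that $L_e=\mathfrak{h}\oplus\mathfrak{d}$ with $\mathfrak{d}=L_e\cap\mathfrak{m}$ and $dp(e)(\mathfrak{d})=D_H$; pick an $\Ad(H)$-invariant complement $\mathfrak{c}$ of $\mathfrak{d}$ in $\mathfrak{m}$. For $\lambda>0$, $\varepsilon>0$ and fixed $\Ad(H)$-invariant auxiliary norms $N_1$ on $\mathfrak{c}$, $N_2$ on $\mathfrak{h}$, define the norm on $\mathfrak{g}$ by
\[
N_{\lambda,\varepsilon}(X)=F_H(X_{\mathfrak{d}})+\lambda\,N_1(X_{\mathfrak{c}})+\varepsilon\,N_2(X_{\mathfrak{h}}),
\]
where $X=X_{\mathfrak{h}}+X_{\mathfrak{d}}+X_{\mathfrak{c}}$. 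This is a genuine $\Ad(H)$-invariant norm, so it defines a left-invariant Finslerian metric $\rho_{\lambda,\varepsilon}$ on $G$, and each $(G,\rho_{\lambda,\varepsilon})$ lies in $\mathfrak{LG}$. By construction the quotient metric on $G/H$ induced through $p\colon G\to G/H$ does not see the $\mathfrak{h}$-direction, hence equals the $G$-invariant Finslerian metric $\bar\rho_\lambda$ with norm $F_H\oplus\lambda N_1$ on $\mathfrak{m}=\mathfrak{d}\oplus\mathfrak{c}$, \emph{independently of} $\varepsilon$; and, exactly as in the submetry part of Theorem~\ref{mman}, $p\colon(G,\rho_{\lambda,\varepsilon})\to(G/H,\bar\rho_\lambda)$ is a submetry in the sense of Definition~\ref{subm}.

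I would then estimate $d_{BGH}\big((G,\rho_{\lambda,\varepsilon}),(G/H,\rho)\big)$ by a triangle inequality through $(G/H,\bar\rho_\lambda)$. For the first (\emph{collapse}) term, the fibre $gH$ is the left translate of $H$ and is tangent to the left-invariant $\mathfrak{h}$-distribution, so any two of its points are joined inside it by a path of $\rho_{\lambda,\varepsilon}$-length at most $\varepsilon\operatorname{diam}_{N_2}(H)=:C\varepsilon$, uniformly in $\lambda$. Since $p$ is a submetry, $p\big(B_G(x,r)\big)=B_{G/H}(p(x),r)$ for every $r$, and with fibres of diameter $\le C\varepsilon$ the natural correspondence has distortion $\le C\varepsilon$; hence $d_{GH}\big(B_G(x,r),B_{G/H}(p(x),r)\big)\le C\varepsilon$ for all $r$ and therefore $d_{BGH}\big((G,\rho_{\lambda,\varepsilon}),(G/H,\bar\rho_\lambda)\big)\le C\varepsilon$. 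For the second (\emph{fattening}) term, $\bar\rho_\lambda$ is the penalty approximation of the (sub-)Finslerian metric $\rho=d_c$: as $\lambda\to\infty$ the penalized distances $\bar\rho_\lambda$ increase monotonically to $\rho$, so $d_{BGH}\big((G/H,\bar\rho_\lambda),(G/H,\rho)\big)\to0$. Combining, $d_{BGH}\big((G,\rho_{\lambda,\varepsilon}),(G/H,\rho)\big)\le C\varepsilon+d_{BGH}\big((G/H,\bar\rho_\lambda),(G/H,\rho)\big)$, and letting $\lambda\to\infty$ and then $\varepsilon\to0$ along a diagonal sequence places $(G/H,\rho)$ in $\overline{\mathfrak{LG}}$, as required.

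The main obstacle I anticipate is the \emph{fattening} convergence $\bar\rho_\lambda\uparrow\rho$, i.e. that penalizing the complementary directions $\mathfrak{c}$ by a factor $\lambda$ recovers precisely the (sub-)Finslerian metric $d_c$ in the limit, and that this convergence is uniform on balls so as to control $d_{BGH}$ rather than merely pointwise distances. This is where the hypotheses genuinely enter: finiteness and the correct limit rest on $L_e$ (equivalently $D$) being completely nonholonomic, so that Rashevsky--Chow applies (the Remark following Theorem~\ref{mman1}), and the monotone convergence must be upgraded to uniform convergence on each ball $B(x,r)$ using finite compactness from the Cohn--Vossen theorem. The remaining bookkeeping --- the diagonal extraction of a single convergent sequence --- is then routine given the completeness of $(\Sigma,d_{BGH})$ and the equivalence of $d_{BGH}$-convergence with Gromov--Hausdorff convergence established in Theorem~\ref{space}.
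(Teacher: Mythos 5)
Your overall strategy is exactly the route the paper indicates (the paper gives no detailed argument, only the remark that Theorem~\ref{add} follows ``using'' Theorem~\ref{mman}): reduce via Theorem~\ref{space} to approximating elements of $\mathfrak{M}$, present them by Theorems~\ref{man}, \ref{mman1}, \ref{mman}, and then approximate $(G/H,\rho)$ by left-invariant Finslerian metrics on $G$ itself by simultaneously fattening the distribution and collapsing the fibre. The fattening step, which you single out as the main anticipated obstacle, is indeed the part needing care but is essentially sound: monotonicity of $\bar\rho_\lambda$, Dini-type uniform convergence on the compact balls supplied by Cohn--Vossen, and the damping factor $e^{-r}$ in (\ref{BGH}) (note $d_{GH}(B_r,B_r')e^{-r}\leq 2re^{-r}$, which handles large $r$ uniformly) give the required $d_{BGH}$ control.

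The genuine gap is in the collapse step, and it is a case you did not flag: your estimate ``any two points of the fibre $gH$ are joined \emph{inside it} by a path of $\rho_{\lambda,\varepsilon}$-length at most $\varepsilon\operatorname{diam}_{N_2}(H)$'' silently assumes that $H$ is \emph{connected}. Theorem~\ref{man} only guarantees a compact Lie subgroup $H$, which may be disconnected, and then points of $gH$ lying over different components of $H$ cannot be joined within the fibre at all; any path between them must spend length in the $\mathfrak{d}\oplus\mathfrak{c}$ directions, which do not shrink with $\varepsilon$. Concretely, projecting through $G\to G/H_0$ shows $\rho_{\lambda,\varepsilon}(g h_1,g h_2)$ is bounded below, uniformly in $\varepsilon$, by the positive $\hat\rho_\lambda$-distance in $G/H_0$ between the corresponding cosets, so as $\varepsilon\to 0$ your family $(G,\rho_{\lambda,\varepsilon})$ Gromov--Hausdorff converges to the finite covering $(G/H_0,\hat\rho_\lambda)$ of $(G/H,\bar\rho_\lambda)$, not to $(G/H,\bar\rho_\lambda)$. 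The defect cannot be repaired merely by choosing a better presentation: for the round projective plane $M=\mathbb{RP}^2$ the only transitive connected isometry group is $SO(3)$ with stabilizer $O(2)$, which is disconnected, and your construction then produces spaces converging to a round $2$-sphere instead of $\mathbb{RP}^2$. Thus, as written, your argument proves density of $\mathfrak{LG}$ only in the subclass of homogeneous manifolds admitting a presentation $G/H$ with connected $H$ (e.g.\ all simply connected ones), and an additional idea is required to handle disconnected isotropy before the reduction through $\overline{\mathfrak{M}}$ closes the proof of Theorem~\ref{add}.
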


The last statement of theorem \ref{mman} implies that the search of geodesics and
shortest arcs of invariant (sub-)Finslerian or (sub-)Riemannian metric 
on homogeneous manifolds reduces in many respects to the case of Lie 
groups with left-invariant (sub-)Finslerian or (sub-)Riemannian metric.

Any shortest arc, parametrized by the arc-length, on $(G,\rho_0)$ from 
theorem \ref{mman} is a solution of a \textit{time-optimal problem}; so 
it necessarily satisfies the \textit{Pontryagin maximum principle} 
(PMP) \cite{Pont}, \cite{Ber-89.2}. Unfortunately, this principle is 
useful only for so-called \textit{normal} shortest arcs and geodesics, 
when a maximum, supplied for them by PMP, is positive. Every normal 
geodesic on $(G,\rho_0)$ is smooth if $\rho_0$ is sub-Riemannian metric; 
moreover, if any geodesic on $(G,\rho_0)$ is normal (which is always 
true if $\rho_0$ is Riemannian) then any geodesic on $(G/H,\rho)$ is smooth. 

Let us note that using PMP, the author found in paper \cite{Ber-94} 
all geodesics and shortest arcs of arbitrary left-invariant 
sub-Finslerian metric on three-dimensional Heisenberg group.  

\section{Tangent cones and Carnot groups}

\begin{definition}
\label{sim}
A bijection of metric space $(M,\rho)$ onto itself is called a 
\textit{(metric) $a$-similarity}, if $\rho(f(x),f(y))=a\rho(x,y)$ 
for all points $x,y \in M$, where
$a\in \mathbb{R}, a>0$. The $a$-similarity is called
\textit{nontrivial}, if $a\neq 1.$
\end{definition}

\begin{theorem}
\label{ccg} \cite{BV-92}, \cite{Ber-2004}
A locally compact homogeneous space with an inner metric $(M,\rho)$ 
admits nontri\-vi\-al metric similarities if and only
if  $(M,\rho)$ is isometric to a finite-dimensional normed vector 
space or to a Carnot group, i.e. connected, simply connected,
(noncommutative) nilpotent  stratified  Lie group $C$ with the Lie algebra 
$LC=L=\bigoplus_{k=1}^{m}L_k$ (of nilpotentness
depth $m>1$), which is a direct sum of vector subspaces $L_k\subset L$ 
under the conditions $L_{i+1}=[L_i,L_1];\quad L_k=0 \quad\mbox{if}\quad k>m;$ 
with left-invariant sub-Finslerian metric $d_{cc},$ defined by a 
left-invariant norm $F_c$ on the left-invariant distribution $\Delta,$ 
where $\Delta(e)=L_1.$ Moreover, $(M,\rho)$ admits $a$-similarities 
for all positive $a\in \mathbb{R}.$
\end{theorem}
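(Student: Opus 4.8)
The plan is to prove the two implications separately, folding the final ("moreover") sentence into each. For sufficiency, let me first dispose of the model cases. If $(M,\rho)$ is a finite-dimensional normed space $(V,\|\cdot\|)$, the homotheties $x\mapsto ax$ are $a$-similarities for every $a>0$. If $(M,\rho)$ is a Carnot group $C$ with $LC=\bigoplus_{k=1}^{m}L_k$ and $d_{cc}$ attached to the norm $F_c$ on $\Delta(e)=L_1$, then the grading maps $\delta_a$ defined on $LC$ by $\delta_a|_{L_k}=a^k\,\mathrm{Id}$ integrate through $\exp$ to Lie automorphisms of $C$; these carry $\Delta$-horizontal curves to horizontal curves and multiply $F_c$-length by $a$, so each $\delta_a$ is an $a$-similarity of $d_{cc}$ for all $a>0$. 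This settles sufficiency and the last sentence in both cases.

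For necessity, suppose $(M,\rho)$ carries a nontrivial $a$-similarity $f$. Replacing $f$ by $f^{-1}$ (Definition \ref{sim} makes $f$ a bijection) I may assume $0<a<1$, so $f$ contracts the complete space $(M,\rho)$ and, by the Banach principle, has a unique fixed point $o$; then $f^{n}$ is an $a^{n}$-similarity fixing $o$, with $f(B(o,R))=B(o,aR)$. By local compactness and the Cohn–Vossen theorem the balls are compact, so $B(o,R)$ is covered by finitely many, say $N$, balls of radius $aR$; applying the similarities this iterates to a covering of $B(o,R)$ by $N^{k}$ balls of radius $a^{k}R$, whence the box (and Hausdorff) dimension is at most $\log N/\log(1/a)<\infty$. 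Since topological dimension never exceeds Hausdorff dimension, $(M,\rho)$ has finite topological dimension, so by Theorem \ref{man} it is a homogeneous manifold $M=G/H$ with $G$ a connected Lie group and $H$ compact, and by Theorem \ref{mman} the metric $\rho$ is (sub-)Finslerian, governed by a completely nonholonomic $G$-invariant distribution $D$ with invariant norm $F$.

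Next I would blow up at $o$ using the $d_{BGH}$-machinery of Theorem \ref{space}. Form the tangent cone at $o$ as the $d_{BGH}$-limit of the rescalings $(M,a^{-n}\rho,o)$. Each $f^{-n}$ is an isometry $(M,a^{-n}\rho,o)\to(M,\rho,o)$, so every rescaled space is isometric to $(M,\rho)$; by completeness of $(\Sigma,d_{BGH})$ the limit exists and the tangent cone is isometric to $(M,\rho)$ itself. Being a tangent cone it is a metric cone, hence carries dilations $\delta_\lambda$ for \emph{all} $\lambda>0$, composing as $\delta_\lambda\delta_\mu=\delta_{\lambda\mu}$ and fixing $o$. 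Transported back to $(M,\rho)$ this yields $\lambda$-similarities for every $\lambda>0$ — proving the "moreover" clause — and furnishes a one-parameter group $\{\delta_\lambda\}$ of dilations fixing $o$. Conjugation $g\mapsto\delta_\lambda g\delta_\lambda^{-1}$ sends isometries to isometries, defining automorphisms $\phi_\lambda$ of $G$ that preserve the stabilizer $H$, so $d\phi_{e^{t}}$ is a one-parameter automorphism group of $\mathfrak{g}$ preserving $\mathfrak{h}$. Since $\delta_\lambda$ multiplies $\rho$ by $\lambda$ and $\rho$ is the Carnot–Carathéodory metric of $(D,F)$, the ball–box asymptotics force $d\phi_{e^{t}}$ to act as $e^{kt}$ on the $k$-th bracket layer, so its generator $A$ is a derivation of $\mathfrak{g}$ with positive eigenvalues $k$ on eigenspaces $L_k$, with $D(e)=L_1$. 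A Lie algebra admitting a positive grading is nilpotent; and because $D$ bracket-generates $\mathfrak g$ (complete nonholonomicity, the Remark after Theorem \ref{mman1}), the grading is stratified, $L_{k+1}=[L_k,L_1]$. The compact $\phi_\lambda$-invariant subgroup $H$ must be trivial, since otherwise the expanding flow $\delta_\lambda$ would displace a nonzero element of $\mathfrak h$ without bound; hence $M=G=C$ is simply connected, nilpotent and stratified, i.e. a Carnot group with $F_c=F$ on $L_1$, degenerating to a normed vector space exactly when $m=1$.

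The hard part will be the content of the last two paragraphs: rigorously realizing the tangent cone as a genuine metric cone equipped with a continuous one-parameter dilation group, rather than only the discrete dilations $a^{n}$ supplied directly by $f$, and then showing that the $\lambda$-scaling of the sub-Finslerian metric pins the eigenvalues of $A$ to be precisely the positive integers $k$ on the bracket layers. The decisive and most delicate step is the upgrade from an abstract positive grading to the stratified condition $L_{k+1}=[L_k,L_1]$, which requires keeping the dilation eigenstructure and the bracket-generating property of $D$ mutually compatible throughout.
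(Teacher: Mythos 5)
The paper itself states Theorem~\ref{ccg} without proof, deferring to \cite{BV-92} and \cite{Ber-2004}, so your sketch can only be judged against the intended route; in that spirit your overall blow-up strategy is reasonable, and your sufficiency half and the dimension-bound reduction to Theorems~\ref{man} and \ref{mman} are sound. But there are two genuine gaps in the necessity half. First, the passage from the single similarity $f$ to dilations of all ratios does not work as written. From $f$ you get only the discrete group $\{a^{n}:n\in\mathbb{Z}\}$ of similarity ratios, and your tangent cone is formed along the discrete sequence $(M,a^{-n}\rho,o)$ --- a sequence that is constant up to isometry, so the statement ``the tangent cone is isometric to $(M,\rho)$'' is a tautology and buys nothing. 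The inference ``being a tangent cone it is a metric cone, hence carries dilations $\delta_{\lambda}$ for all $\lambda>0$'' is exactly what fails for a subsequential blow-up: it is valid only when the full limit $\lim_{\alpha\to+\infty}((M,o),\alpha\rho)$ exists, which is the first (and independently proved, in \cite{Ber-90}) statement of Theorem~\ref{tc}, not a consequence of the $d_{BGH}$-completeness of Theorem~\ref{space} that you invoke. Indeed the remark following Theorem~\ref{tc} in the paper shows this is precisely the intended mechanism --- full-limit existence plus your discrete self-similarity yields $a$-similarities for every $a>0$ --- while the classification clauses of Theorem~\ref{tc} are deduced \emph{from} Theorem~\ref{ccg}, so you must cite only the existence statement to avoid circularity. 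You half-acknowledge this gap in your closing paragraph, but the proof as written does not close it.

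Second, your claim that the isotropy group $H$ must be trivial ``since otherwise the expanding flow $\delta_{\lambda}$ would displace a nonzero element of $\mathfrak{h}$ without bound'' is false. The conjugation flow $\phi_{\lambda}$ preserves the compact group $H$, so on $\mathfrak{h}$ it acts with bounded (in the relevant eigenstructure, zero) exponents; $\mathfrak{h}$ simply sits in the $0$-eigenspace of the generator $A$, alongside the positive layers $L_{k}$. A concrete counterexample to your step is the Heisenberg group with its Carnot--Carath\'eodory metric: taking $G$ to be the identity component of the full isometry group, $H\cong U(1)$ is nontrivial and is preserved by (indeed commutes with) conjugation by the dilations. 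The correct conclusion is not $M=G=C$ but rather that the positive-eigenvalue part $\bigoplus_{k\geq 1}L_{k}$ is a nilpotent (positively graded, hence nilpotent) subalgebra whose corresponding subgroup acts \emph{simply transitively} on $M$, and this subgroup is the Carnot group $C$; the stratification $L_{k+1}=[L_{k},L_{1}]$ then still needs the ball--box asymptotics you defer, to pin the eigenvalues of $A$ to the integers $k$ and to match the grading with the bracket-generating filtration of $D$. As it stands, the final paragraph of your necessity argument proves neither the triviality of the relevant isotropy nor the stratified structure, and these are exactly the points where the real work of \cite{BV-92} and \cite{Ber-2004} lies.
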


\begin{theorem}
\label{tc}
\cite{Ber-90}
If $(M,\rho)$ is a homogeneous manifold with inner metric then at any point 
$x\in (M,\rho)$ there exists the tangent cone $\tau_xM$ to the 
manifold $(M,\rho)$ (in the Gromov's sense \cite{Gro}) as the 
Gromov-Hausdorff limit of spaces
$((M,x),\alpha \rho)$ when $\alpha \rightarrow +\infty.$ Let suppose that 
$(M,\rho)$ is $(M=G/H,d_c)$ as in theorem \ref{mman1}. If $d_c$ 
is Finslerian metric, defined by the norm $F_0$ on $D_H=T_HM$, 
then $\tau_xM$ is isometric to the normed vector space $(T_HM,F_0);$ 
otherwise $\tau_xM$ is isometric to a Carnot group $(C,d_{cc}),$ 
where normed vector spaces $(L_1,F_c)$ and $(D_H,F_0)$ are isometric.
\end{theorem}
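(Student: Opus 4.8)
The plan is to combine the blow-up construction with the metric-similarity characterization of Theorem~\ref{ccg}. By homogeneity of $(M,\rho)$ the isometry group acts transitively, so it suffices to construct the tangent cone at the base point $x_0=H\in G/H$; each rescaled pointed space $((M,x_0),\alpha\rho)$ is again a locally compact homogeneous geodesic space, since multiplying a metric by a positive constant preserves both transitivity of the isometry group and the inner-metric property. The crucial elementary remark is that this family is self-similar under rescaling: for every $\beta>0$ one has $((M,x_0),\beta\alpha\rho)=\beta\cdot((M,x_0),\alpha\rho)$, so any Gromov--Hausdorff limit taken along $\alpha\to+\infty$ is automatically invariant under the metric $\beta$-similarity for every $\beta>0$.

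The first and technically heaviest task is to prove that the limit exists. Using Theorem~\ref{mman1} I would work in coordinates near $x_0$ adapted to the flag $D=D^1\subset D^2\subset\dots\subset D^m=TM$ generated by the distribution, where $D^{k+1}=D^k+[D,D^k]$; the equality $D^m_{x_0}=T_{x_0}M$ holds because conditions (a1), (a2), (a4) make $D$ completely nonholonomic. In such privileged coordinates the length functional of $F$ admits ball--box bounds, yielding a uniform estimate on the number of $\varepsilon$-balls needed to cover an $r$-ball of $((M,x_0),\alpha\rho)$, uniformly in $\alpha$; by Gromov's precompactness criterion the family is precompact in the Gromov--Hausdorff topology. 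To pin down the limit I would show that the rescaled distances converge to the graded nilpotent approximation: assigning weight $k$ to the weight-$k$ directions (those lying in $D^k_{x_0}$ but not in $D^{k-1}_{x_0}$), the degree-$k$ brackets survive the rescaling while all lower-order terms vanish as $\alpha\to+\infty$, so every subsequential limit is isometric to one fixed space. This simultaneously establishes existence and identifies the limit with the metric determined by the leading graded part of the structure.

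It remains to identify this limit explicitly, where Theorem~\ref{ccg} does the work. By Theorem~\ref{space} the class $\Omega$ is closed under Gromov--Hausdorff convergence, so $\tau_{x_0}M$ is again a locally compact homogeneous geodesic space, and by the first paragraph it admits nontrivial $\beta$-similarities for every $\beta>0$; hence Theorem~\ref{ccg} forces $\tau_{x_0}M$ to be isometric either to a finite-dimensional normed vector space or to a Carnot group with left-invariant sub-Finslerian metric. If $d_c$ is Finslerian, then $D=TM$, the flag is trivial ($m=1$), the graded approximation is abelian, and the limit reduces to the first-order approximation of the Finsler norm, namely the normed space $(T_HM,F_0)$. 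If $d_c$ is properly sub-Finslerian, then $m>1$ and the graded Lie algebra $L=\bigoplus_{k\ge 1}L_k$ with $L_k=V_k/V_{k-1}$, formed from the filtration $V_0=\mathfrak{h}\subset V_1=L_e\subset\dots\subset V_m=\mathfrak{g}$ (where $V_{k+1}=V_k+[L_e,V_k]$, each $V_k$ is $\Ad(H)$-invariant and $[V_i,V_j]\subset V_{i+j}$, so the graded bracket is well defined and $L_{k+1}=[L_1,L_k]$), is exactly the stratified Carnot algebra of Theorem~\ref{ccg}, with $L_1=V_1/V_0\cong D_H$ via $dp(e)$. Under this identification the norm $F_0$ on $D_H$ is carried to the Carnot norm $F_c$ on $L_1$, because the lengths of a horizontal curve of $d_c$ and of its nilpotent approximation agree to first order; thus $(L_1,F_c)$ and $(D_H,F_0)$ are isometric normed spaces.

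The main obstacle is the existence-and-identification step of the second paragraph: controlling the rescaled metrics uniformly and proving that they converge to a single graded limit rather than to some degenerate object. Homogeneity is a decisive simplification here, since it transfers all local estimates from the single point $x_0$ to the whole space, but one still needs the ball--box estimates in privileged coordinates to force the limit to be the graded nilpotentization. Once convergence is secured, the algebraic identification of the Carnot algebra and the transfer of the norm $F_0$ to $F_c$ are routine.
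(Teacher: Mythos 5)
Your proposal is correct and, for everything the paper itself argues, follows essentially the same route: the paper likewise observes that the rescaled spaces remain homogeneous, invokes Theorem \ref{space} to conclude that the limit is a locally compact homogeneous space with inner metric admitting $a$-similarities for every $a>0$, and then identifies it via Theorem \ref{ccg}. The existence of the limit (the first statement) is not proved in the paper but cited to the thesis \cite{Ber-90}; your Mitchell--Bella\"{\i}che-style sketch via privileged coordinates, ball--box estimates, and convergence to the graded nilpotent approximation---including the filtration $V_0=\mathfrak{h}\subset V_1=L_e\subset\dots\subset V_m=\mathfrak{g}$ and the transfer of $F_0$ on $D_H\cong V_1/V_0$ to $F_c$ on $L_1$---is the standard and correct way to fill exactly that step.
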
 

Let us note that it follows from theorem \ref{space} and the first 
statement of theorem \ref{tc} that $\tau_xM$ is a locally compact 
homogeneous space with inner metric which has $a$-similarities for 
every positive number $a$. Now other
statements of theorem \ref{tc} could be deduced from theorem \ref{ccg}.

\section{Homogeneous Finsler manifolds}

\begin{theorem}
\label{fins}\cite{BV-92}
A metric space $(M,\rho)$ is isometric to a homogeneous Finslerian manifold
if and only if $(M,\rho)$ is locally compact homogeneous space with inner
metric of finite topological dimension which is equal to its Hausdorff dimension.
\end{theorem}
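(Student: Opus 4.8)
The plan is to prove Theorem~\ref{fins} by showing the two implications separately, with the forward direction following readily from earlier structural results and the converse being the substantive part. For the forward implication, suppose $(M,\rho)$ is isometric to a homogeneous Finslerian manifold. Then it is a homogeneous manifold with inner metric, hence locally compact with inner metric by the general setup, and its topological dimension equals $\dim M = n$. The Hausdorff dimension of a Finslerian manifold equals its topological dimension because the Finsler norm $F$ is bi-Lipschitz comparable to an auxiliary Euclidean (Riemannian) norm on each tangent space, so the distance $\rho$ is locally bi-Lipschitz to a Riemannian distance, and bi-Lipschitz maps preserve Hausdorff dimension; a Riemannian $n$-manifold has Hausdorff dimension $n$. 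This gives the equality of the two dimensions.

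For the converse, assume $(M,\rho)$ is a locally compact homogeneous space with inner metric whose topological dimension equals its Hausdorff dimension, call this common value $n$. First I would invoke Theorem~\ref{man}: since the topological dimension $n$ is finite, statement~(2) holds, so $(M,\rho)$ is isometric to $(G/H,d)$ with $G$ a connected Lie group, $H$ a compact Lie subgroup, and $d$ a $G$-invariant inner metric. By Theorem~\ref{mman}, every such $G$-invariant inner metric is either Finslerian or sub-Finslerian. The entire task therefore reduces to excluding the properly sub-Finslerian case, i.e.\ the case where the defining distribution $D$ satisfies $D \neq TM$.

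The key tool here is the tangent cone. By Theorem~\ref{tc}, at any point $x$ the tangent cone $\tau_x M$ exists and is isometric either to a normed vector space $(T_HM,F_0)$ when $d$ is Finslerian, or to a Carnot group $(C,d_{cc})$ with nilpotentness depth $m>1$ when $d$ is properly sub-Finslerian. The plan is to compute the Hausdorff dimension in each case and compare it to $n$. A normed vector space of dimension $n$ has Hausdorff dimension $n$, matching the topological dimension. By contrast, a Carnot group carries a family of dilations $\delta_\lambda$ acting as $a$-similarities (Theorem~\ref{ccg}), and these scale the metric so that the Hausdorff measure transforms with the \emph{homogeneous} dimension $Q=\sum_{k=1}^{m} k\dim L_k$, which strictly exceeds the topological dimension $n=\sum_{k=1}^{m}\dim L_k$ precisely because $m>1$ forces some layer $L_k$ with $k\geq 2$ to be nonzero. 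Thus a Carnot tangent cone has Hausdorff dimension $Q>n$.

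The main obstacle, and the step I would handle most carefully, is relating the Hausdorff dimension of the tangent cone to that of $(M,\rho)$ itself. Since the tangent cone is a Gromov--Hausdorff limit of the rescaled spaces $((M,x),\alpha\rho)$, and by homogeneity the local metric structure is the same at every point, one expects the local Hausdorff dimension of $(M,\rho)$ to coincide with the (global, scale-invariant) Hausdorff dimension of $\tau_x M$. Making this rigorous requires a ball-volume comparison: in the sub-Finslerian case the Ball--Box theorem gives $\rho$-balls of radius $r$ a measure comparable to $r^{Q}$, so the metric measure is $Q$-Ahlfors regular and the Hausdorff dimension of $(M,\rho)$ equals $Q>n$, contradicting the hypothesis that Hausdorff dimension equals topological dimension $n$. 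Hence the sub-Finslerian case cannot occur under our assumption, $d$ must be Finslerian, and $(M,\rho)$ is isometric to a homogeneous Finslerian manifold, completing the proof.
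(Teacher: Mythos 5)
Your proposal is correct, and its skeleton coincides with the paper's: Theorems~\ref{man} and \ref{mman} reduce everything to the (sub-)Finslerian dichotomy, Theorem~\ref{tc} identifies the tangent cone as a Carnot group in the properly sub-Finslerian case, and the contradiction comes from the homogeneous dimension $Q=\sum_{k=1}^{m}k\dim L_k$ strictly exceeding $n=\sum_{k=1}^{m}\dim L_k$ when $m>1$. The one place you diverge is the step you yourself flag as delicate, namely transferring the dimension count from the cone back to $(M,\rho)$: you invoke the Ball--Box theorem directly on the sub-Finsler metric to get Ahlfors $Q$-regularity, whereas the paper first averages $F_H$ over the compact isotropy group to produce an \emph{invariant} Euclidean norm $F_{1H}$, obtaining a bi-Lipschitz equivalent invariant (sub-)Riemannian metric $\rho_1$ with the same Hausdorff dimension, and then cites the standard fact that an equiregular sub-Riemannian manifold and its tangent cone have equal Hausdorff dimensions. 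The two are essentially interchangeable --- Ball--Box in the sub-Finsler setting is itself deduced from the sub-Riemannian case via equivalence of norms on a finite-dimensional space, which is exactly the reduction the paper makes explicit --- but the paper's averaging buys two things your route leaves implicit: the comparison metric remains homogeneous (so the invariant-metric machinery of Theorems~\ref{tc} and \ref{ccg} applies to it verbatim), and one gets the clean equivalence that $\rho$ is Finslerian if and only if $\rho_1$ is Riemannian. You should also state explicitly that homogeneity forces the distribution to be equiregular (the growth vector of an invariant distribution is constant), since both Ahlfors regularity and the tangent-cone dimension equality require it; you use this silently.
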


\begin{proof}
The necessity of these conditions is well-known.

Sufficiency. By theorems \ref{man}, \ref{mman}, every locally compact 
homogeneous space with inner metric $(M,\rho)$ of finite topological 
dimension is a (sub-)Finslerian homogeneous manifold, defined by conditions 
from theorem \ref{mman1}. 

If $F_H$ from theorem \ref{mman1} is not Euclidean, then
we can find an Euclidean norm $F_{1H}$ on $D_H,$ invariant relative 
to the (linear) isotropy group of $G/H$ at $H\in G/H.$ Then there 
is a constant $c>1$ such that
$(1/c)F_{1H}\leq F_{H}\leq cF_{1H}.$ Now let $\rho_1=d_{1c}$ 
be $G$-invariant (sub-)Riemannian metric on $G/H$ defined by 
formula (\ref{dist}), where $F$ is $G$-invariant norm on $D$ 
such that $F(H)=F_{1H}.$ Then it is easy to see that
$(1/c)\rho_{1}\leq \rho\leq c\rho_{1}$ and therefore $(M,\rho)$ and $(M,\rho_1)$ 
have equal Hausdorff dimensions.

The space $(M,\rho)$ is Finslerian if and only if $(M,\rho_1)$ is 
Riemannian. To finish proof it is enough to apply for $(M,\rho_1)$ 
theorems \ref{tc}, \ref{ccg},
and known facts that so-called \textit{equiregular} connected smooth 
sub-Riemannian manifold $M$ and any its tangent cone $\tau_xM$ have equal 
Hausdorff dimensions, while the Hausdorff dimension of the Carnot group 
$(C,d_{cc})$ from theorem \ref{ccg} is equal to 
$$\sum_{k=1}^{m}k\dim(L_k)> \sum_{k=1}^{m}\dim(L_k)=\dim(T_H(G/H)),\quad m>1.$$
\end{proof}

\begin{theorem}
\cite{Ber-89.2}, \cite{Ber-88}
Every Lie group with bi-invariant (i.e. with left- and right-invariant) inner
metric is the Lie group with bi-invariant Finslerian metric.
\end{theorem}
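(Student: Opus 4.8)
The plan is to reduce everything to the Lie-algebraic description of the metric supplied by Theorems~\ref{mman} and~\ref{mman1}, and then to bring in right-invariance to force the defining distribution to fill up the whole tangent bundle. Note first that an inner metric forces path-connectedness (two points at finite distance must be joined by paths), so $(G,\rho)$ is connected. Regarding $\rho$ as a left-invariant inner metric on $G=G/\{e\}$, Theorem~\ref{mman} tells me that $\rho$ is sub-Finslerian or Finslerian, i.e.\ $\rho=d_c$ for a pair $(D,F)$ as in Theorem~\ref{mman1}: here $D$ is a left-invariant distribution with $D_e=L_e$, the subspace $L_e$ generates $\mathfrak{g}$ as a Lie algebra by condition (a1), and $F$ is a left-invariant norm on $D$. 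The metric is genuinely Finslerian precisely when $D=TG$, i.e.\ $L_e=\mathfrak{g}$; thus the whole statement reduces to proving this one equality.

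First I would observe that bi-invariance makes every inner automorphism $c_g\colon x\mapsto gxg^{-1}$ an isometry, since $c_g=L_g\circ R_{g^{-1}}$ is a composition of a left and a right translation, each of which is an isometry of $(G,\rho)$. Moreover $c_g$ is a diffeomorphism fixing $e$ with differential $(dc_g)_e=\Ad(g)$. The crucial point is that the distribution $D$ is intrinsically determined by the metric: a piecewise-smooth curve has finite $d_c$-length exactly when it is (almost everywhere) tangent to $D$, so an isometry that is simultaneously a diffeomorphism must carry horizontal curves to horizontal curves and hence send $D$ to $D$. Applying this to $c_g$ and evaluating at the fixed point $e$ gives $\Ad(g)\,L_e=L_e$ for every $g\in G$, i.e.\ $L_e$ is $\Ad(G)$-invariant. (Equivalently, one may argue through Theorem~\ref{tc}: $c_g$ induces an isometry of the tangent cone $\tau_eM$ fixing the origin, and such an isometry preserves the first stratum $L_1\cong D_e$ of the Carnot structure.)

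With $\Ad(G)$-invariance of $L_e$ in hand, the conclusion is pure Lie algebra. For connected $G$, invariance of $L_e$ under $\Ad(G)$ is the same as invariance under $\ad(\mathfrak{g})$, i.e.\ $[\mathfrak{g},L_e]\subseteq L_e$, so $L_e$ is an ideal and, in particular, a Lie subalgebra, because $[L_e,L_e]\subseteq[\mathfrak{g},L_e]\subseteq L_e$. But then the smallest Lie subalgebra containing $L_e$ is $L_e$ itself, while condition (a1) says this subalgebra is all of $\mathfrak{g}$; hence $L_e=\mathfrak{g}$ and $D=TG$, so $\rho$ is Finslerian. Finally, since each $c_g$ is an isometry, its differential $\Ad(g)$ preserves the infinitesimal norm $F_e$ on $T_eG$, so $F_e$ is $\Ad(G)$-invariant and the resulting Finslerian metric is bi-invariant, as claimed.

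The step I expect to be the main obstacle is the middle one: rigorously justifying that the horizontal subspace $D_e$ is a metric invariant, so that the isometries $c_g$ are forced to preserve it. Everything upstream (producing $(D,F)$) is Theorem~\ref{mman}, and everything downstream (ideal $\Rightarrow$ subalgebra $\Rightarrow$ all of $\mathfrak{g}$) is routine. The genuine content lies in tying the analytic description ``finite $d_c$-length if and only if tangent to $D$'' to the metric tightly enough that an abstract isometry must respect it; the tangent-cone formulation via Theorem~\ref{tc} is the most robust way I know to make this airtight.
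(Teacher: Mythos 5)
The paper states this theorem without proof (only the citations \cite{Ber-89.2}, \cite{Ber-88}), and your proposal is correct and follows essentially the same route as those original sources: apply Theorem~\ref{mman} (with $H=\{e\}$) to get the pair $(D,F)$ of Theorem~\ref{mman1}, observe that the conjugations $c_g$ are automatically smooth isometries, deduce $\Ad(G)$-invariance of $L_e=D_e$, so that $L_e$ is an ideal and hence, by condition (a1), all of $\mathfrak{g}$, forcing $D=TG$. The crux you single out --- that the horizontal subspace $D_e$ is determined by the metric, so smooth isometries must preserve it --- is indeed the only nontrivial point, and either of your justifications suffices: non-horizontal piecewise smooth curves have infinite $d_c$-length (standard for sub-Finslerian metrics via ball-box estimates), or isometries of the tangent cone of Theorem~\ref{tc} fixing the origin preserve the first stratum of the Carnot structure.
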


\section{Homogeneous (sub-)Riemannian manifolds}

\begin{theorem}
\label{rim}
\cite{BerP}
A metric space $(M,\rho)$ is isometric to a homogeneous Riemannian manifold
if and only if $(M,\rho)$ is a locally compact homogeneous space with inner
metric of finite topological dimension which has the curvature $\geq K$
in A.~D.~Aleksandrov's sense for some $K\in \mathbb{R}$. 
\end{theorem}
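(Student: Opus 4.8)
The plan is to treat the two implications separately, with essentially all of the substance in the sufficiency direction.

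\emph{Necessity} should be routine. If $(M,\rho)$ is a homogeneous Riemannian manifold, then it is a locally compact homogeneous geodesic space of finite topological dimension with inner (length) metric. For the curvature bound I would use homogeneity: the sectional curvature is a continuous function on the compact Grassmannian of $2$-planes over a single point, hence is bounded below there by some $K$, and since the isometry group acts transitively and preserves sectional curvature, the same $K$ bounds it at every point. Toponogov's comparison theorem then yields that $(M,\rho)$ has curvature $\geq K$ in A.~D.~Aleksandrov's sense.

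\emph{Sufficiency} is the heart of the matter. Suppose $(M,\rho)$ is a locally compact homogeneous space with inner metric, of finite topological dimension $n$, and with curvature $\geq K$ in Aleksandrov's sense. First I would invoke the standard fact from Aleksandrov geometry that a space with curvature bounded below and finite topological dimension $n$ has Hausdorff dimension equal to $n$; thus the topological and Hausdorff dimensions coincide and Theorem~\ref{fins} applies, showing that $(M,\rho)$ is isometric to a homogeneous \emph{Finslerian} manifold $G/H$, with metric $d_c$ as in Theorem~\ref{mman1} given by an $\Ad(H)$-invariant norm $F_0$ on $D_H=T_H(G/H)$. The genuinely sub-Finslerian (nonholonomic) alternative is excluded exactly as in the proof of Theorem~\ref{fins}, since a Carnot group of depth $m>1$ has Hausdorff dimension strictly larger than its topological dimension. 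It then remains to upgrade \emph{Finslerian} to \emph{Riemannian}, i.e. to show $F_0$ is Euclidean. Here I would pass to the tangent cone: a lower curvature bound is inherited by Gromov--Hausdorff blow-up limits, so the tangent cone $\tau_x M$, which exists by Theorem~\ref{tc}, is an Aleksandrov space of curvature $\geq 0$; and by the Finslerian case of Theorem~\ref{tc} it is isometric to the normed vector space $(T_H(G/H),F_0)$. Granting that a finite-dimensional normed space whose norm metric has curvature $\geq 0$ is necessarily Euclidean, the Remark following Theorem~\ref{mman1} then makes $d_c=\rho$ Riemannian, so $(M,\rho)$ is a homogeneous Riemannian manifold.

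The main obstacle is precisely this last claim about normed spaces. I expect to reduce it to dimension two: a lower curvature bound passes to every linear $2$-subspace, since straight-line segments are shortest arcs in any normed space and hence each such subspace is convex and inherits curvature $\geq 0$. I would then exploit the metric self-similarity of a normed plane --- it is a metric cone over its unit circle --- so that the curvature $\geq 0$ cone condition constrains the angular length of the unit circle, and produce an explicit triangle whose Aleksandrov comparison inequality fails unless the unit ball is an ellipse. Carrying out this rigidity estimate, together with the angle and comparison bookkeeping in the non-smooth normed setting, is the delicate step; the remainder of the argument is assembly of Theorems~\ref{fins}, \ref{tc}, and \ref{mman1} with their Remark.
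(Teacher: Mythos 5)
The paper itself contains no proof of Theorem~\ref{rim}: it is stated with a citation to \cite{BerP}, and the survey only supplies the surrounding toolkit. Measured against that toolkit, your architecture is exactly the intended one and essentially the route of \cite{BerP}: necessity via homogeneity plus Toponogov (consistent with the Remark after Definition~\ref{ale}, which notes that a smooth Riemannian manifold has Aleksandrov curvature $\geq K$ iff its sectional curvatures are $\geq K$); sufficiency via (i) equality of topological and Hausdorff dimension for finite-dimensional Aleksandrov spaces \cite{BGP}, (ii) Theorem~\ref{fins} with Theorem~\ref{mman1} to land in the homogeneous Finslerian case with norm $F_0$ on $D_H$, the sub-Finslerian alternative excluded by the Hausdorff-dimension count already used in the proof of Theorem~\ref{fins}, and (iii) passage to the tangent cone, which by Theorem~\ref{tc} is $(T_H M, F_0)$ and, as a blow-up limit ($(M,\alpha\rho)$ has curvature $\geq K/\alpha^{2}$), has curvature $\geq 0$. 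All of this is sound.

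The genuine gap is your final rigidity lemma, which you leave as a plan, and the plan as stated would not go through as written: a normed plane carries dilations, but it is \emph{not} in general the Euclidean cone over its unit circle with the induced (or any evident angular) metric, so ``curvature $\geq 0$ constrains the angular length of the unit circle'' presupposes a cone-structure theorem that is itself nontrivial, and the ``explicit triangle'' estimate is precisely the content to be proved. Fortunately the paper signals a two-line replacement: the Remark after Theorem~\ref{mman1} records that a norm is Euclidean iff the parallelogram law holds, and the Remark after Definition~\ref{ale} upgrades the local quadruple condition to a global one. In the tangent cone $V=(T_HM,F_0)$ of curvature $\geq 0$, apply the midpoint comparison (distance to the midpoint of a shortest arc is at least the Euclidean median length) with $c=0$, $a=u+v$, $b=u-v$ and midpoint $m=u$ of the straight segment $[a,b]$, which is a shortest arc in any normed space:
\begin{equation*}
\|u\|^{2}\;\geq\;\tfrac12\|u+v\|^{2}+\tfrac12\|u-v\|^{2}-\|v\|^{2},
\quad\text{i.e.}\quad
\|u+v\|^{2}+\|u-v\|^{2}\leq 2\|u\|^{2}+2\|v\|^{2}
\end{equation*}
for all $u,v\in V$; substituting $u+v$ and $u-v$ for $u$ and $v$ reverses the inequality, forcing equality, so $F_0$ satisfies the parallelogram law and is Euclidean. (Alternatively, the splitting theorem for Aleksandrov spaces of curvature $\geq 0$ applied to the lines through each point of $V$ yields $V\cong\mathbb{R}^{n}$ Euclidean, and Mazur--Ulam transfers this to the norm.) With either substitute for your step, the proof closes and coincides in substance with the cited one.
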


Notice that there are different equivalent definitions of Aleksandrov 
spaces of curvature $\geq K$ \cite{Al},\cite{BGP}. The following 
definition belongs to the author.

\begin{definition}
\label{ale}
\cite{Ber-86}, \cite{BerP}
A space $M$ with an inner metric $\rho$ and with the local existence 
of shortest arcs is called the {\it Aleksandrov space} of
curvature $\geq K$, if locally any quadruple of points in $M$ is 
isometric to some quadruple of points in a simply connected complete 
Riemannian 3-manifold of some constant sectional curvature $k\geq K,$ 
where the number $k$ depends on considered quadruple of points.
\end{definition}

\begin{remark}
There are infinite dimensional compact homogeneous spaces with inner metric
of Aleksandrov curvature $\geq 0$ \cite{BerP}. A smooth Riemannian manifold
has Aleksandrov curvature $\geq K$ if and only if its sectional curvatures
$\geq K.$ The definition \ref{ale} is local, but every quadruple 
of points in geodesic Aleksandrov space of curvature $\geq K$ in a 
sense of this definition satisfies conditions from definition \ref{ale} 
\cite{BGP}. Some other conditions, in terms of orbits of 1-parameter 
subgroups of isometries, characterizing homogeneous Finsler and 
Riemannian manifolds, are given in papers \cite{Ber-89.2}, \cite{Ber-89.1}.   
\end{remark}

I don't know simple metric conditions, characterizing homogeneous sub-Riemannian
manifolds, aside as the Gromov-Hausdorff limits of homogeneous Riemannian
manifolds, when limits have different finite topological and Hausdorff dimensions. 

It is interesting that there is a probabilistic approach to solve 
this problem, at least in the case of left-invariant inner metrics on Lie groups. 

\begin{theorem}
\cite{Hey}, \cite{BGorb-14}
Left-invariant (sub-)Riemannian metrics on a connected Lie group are 
in 1-1 correspondence with symmetric Gaussian 1-parameter convolution semigroups of 
$\{e\}$-continuous, absolutely continuous with respect to left-invariant
Haar measure, probability measures on it.
\end{theorem}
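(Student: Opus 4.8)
The plan is to realize the stated correspondence through the infinitesimal generator, using the L\'evy--Hunt theory of convolution semigroups on Lie groups as developed in \cite{Hey}. Fix a connected Lie group $G$ with Lie algebra $\mathfrak{g}$ and a basis $X_1,\dots,X_n$ of left-invariant vector fields. First I would recall that every $\{e\}$-continuous $1$-parameter convolution semigroup $(\mu_t)_{t\geq 0}$ possesses an infinitesimal generator $A$, defined on smooth compactly supported functions by Hunt's formula, which splits into a first-order (drift) part, a second-order (Gaussian) part $\sum_{i,j}a_{ij}X_iX_j$ with $(a_{ij})$ symmetric and non-negative definite, and an integral (jump) part governed by a L\'evy measure. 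The two hypotheses prune this formula: the \emph{Gaussian} condition annihilates the L\'evy measure, while the \emph{symmetry} $\check{\mu}_t=\mu_t$ annihilates the drift. Hence a symmetric Gaussian semigroup both determines and is determined by a single non-negative quadratic form $a$ on $\mathfrak{g}$, with $A=\sum_{i,j}a_{ij}X_iX_j$.

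Next I would set up the geometric dictionary. A non-negative quadratic form $a$ on $\mathfrak{g}$ singles out the subspace $D_e\subseteq\mathfrak{g}$ on which $a$ is positive definite (a complement to its kernel) and endows it with the inner product induced by $a$. Left translating $D_e$ produces a left-invariant distribution $D$ on $G$ together with a left-invariant fibrewise Euclidean norm; this is exactly the data of a left-invariant (sub-)Riemannian metric in the sense of conditions (a1)--(a5) and Theorem \ref{mman1}, specialized to $H=\{e\}$. Conversely, such a metric is nothing but a pair $(D_e,\langle\cdot,\cdot\rangle)$, equivalently such a form $a$; it is Riemannian when $a$ is positive definite on all of $\mathfrak{g}$ and sub-Riemannian when $a$ degenerates. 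If $Y_1,\dots,Y_k$ is an $a$-orthonormal basis of $D_e$, the associated operator is the sub-Laplacian $A=\sum_{\alpha=1}^k Y_\alpha^2$.

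With the dictionary in place I would verify both directions. Starting from a metric, the sub-Laplacian $A=\sum_\alpha Y_\alpha^2$ generates a convolution heat semigroup $(\mu_t)$ that is $\{e\}$-continuous, Gaussian (no jump part, by construction), and symmetric (no drift, being a sum of squares); the one substantive point is absolute continuity with respect to left Haar measure. Here I would invoke H\"ormander's hypoellipticity theorem: since the distribution underlying a genuine (sub-)Riemannian metric is completely nonholonomic, as recorded in the Remark following Theorem \ref{mman1}, the operator $A$ is hypoelliptic, so its heat kernel $p_t(e,\cdot)$ exists and furnishes a smooth, hence integrable, density for $\mu_t$. In the reverse direction I recover $a$, and hence $(D_e,\langle\cdot,\cdot\rangle)$, from the generator; absolute continuity then forces $D_e$ to be bracket-generating, for otherwise the Lie subalgebra generated by $D_e$ would be a proper $\mathfrak{h}\subsetneq\mathfrak{g}$, the measures $\mu_t$ would concentrate on the connected subgroup with Lie algebra $\mathfrak{h}$, and such a subgroup is a Haar-null subset of $G$. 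Thus the recovered structure is a genuine left-invariant (sub-)Riemannian metric, and since an $\{e\}$-continuous convolution semigroup is uniquely determined by its generator, the two assignments are mutually inverse.

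The main obstacle I anticipate lies at the interface between the analytic and the probabilistic descriptions. Establishing that symmetry removes the drift term outright (and does not merely symmetrize the Gaussian part) requires care on non-unimodular groups, where the left-invariant fields $X_i$ have nonzero divergence and the notions of a symmetric measure and of a generator self-adjoint with respect to left Haar measure must be carefully reconciled. The second delicate point is the equivalence between absolute continuity and the completely nonholonomic condition: the forward implication rests on H\"ormander's theorem, the converse on the concentration argument above, and both must be controlled uniformly in $t$ to conclude absolute continuity of every $\mu_t$. Once these points are secured, the bijectivity of the correspondence is a formal consequence of the generator being a complete invariant of the semigroup.
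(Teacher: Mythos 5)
Your proposal is correct and follows precisely the route the paper relies on: the paper itself gives no proof, deferring to Theorem 6.3.8 of Heyer's book \cite{Hey}, which is exactly your argument --- Hunt's formula with the Gaussian hypothesis annihilating the L\'evy measure and symmetry annihilating the drift, leaving a sub-Laplacian $\sum_{\alpha}Y_{\alpha}^2$ attached to a quadratic form on $\mathfrak{g}$, with absolute continuity equivalent to the completely nonholonomic (H\"ormander) condition via hypoellipticity in one direction and the Haar-nullity of a proper connected subgroup in the other. One minor remark: the uniformity in $t$ you worry about is not needed, since hypoellipticity of $\partial_t - A$ furnishes a smooth density for each $\mu_t$, $t>0$, separately.
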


Omitting details, we reference to exact definitions and theorem 
6.3.8 in book\cite{Hey} which characterizes generating 
infinitesimal (hypo-)elliptic operators of such semigroups. Notice 
that there is no mention to left-invariant (sub-)Riemannian metrics 
on Lie groups in \cite{Hey}.

\begin{problem}
It would be desirable to get a generalization of theorem 6.3.8 in \cite{Hey}
to the case of homogeneous manifolds $G/H$ and use it for 
(sub-)Riemannian geometry.
\end{problem}

\section{Homogeneous manifolds with integrable invariant distributions}

In this section we consider very natural problem: \textit{describe 
connected homogeneous manifolds such that every invariant inner metric 
on any of them is Finslerian}.

\begin{theorem}
\label{main}
\cite{Ber-89.1}, \cite{Ber-89.2}
Every $G$-invariant inner (geodesic) metric on the homogeneous space 
$G/H$ of a connected Lie group $G$ with a compact stabilizer 
$H\subset G$ is Finslerian if and only if

(A) Every $G$-invariant distribution on $G/H$ is integrable.

This is equivalent to the condition

(B) Every $Ad(H)$-invariant vector subspace $c$ in $\mathfrak{g},$ containing 
$\mathfrak{h},$ is a Lie algebra.

If $H$ is connected, in particular, if $G/H$ is simply connected, 
then the $Ad(H)$-invariance of the space $c$ is equivalent to the 
inclusion $[\mathfrak{h},c]\subset c.$
\end{theorem}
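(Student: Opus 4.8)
\emph{The plan} is to translate all three conditions into linear algebra inside $\mathfrak{g}$ through the correspondence already furnished by Theorems \ref{mman1} and \ref{mman}, and then verify the equivalences. By those theorems every $G$-invariant inner metric on $G/H$ arises from a pair $(L_e,F_H)$ satisfying (a1)--(a5); its distribution $D$ is determined by $D(H)=dp(e)(L_e)$, where $L_e$ is an $\Ad(H)$-invariant subspace with $\mathfrak{h}\subseteq L_e$ and $\langle L_e\rangle=\mathfrak{g}$ (the bracket-generating condition (a1) that makes $d_c$ finite). Since $\ker dp(e)=\mathfrak{h}\subseteq L_e$, the metric is Finslerian precisely when $L_e=\mathfrak{g}$. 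Identically, an arbitrary $G$-invariant distribution $D_c$ corresponds to an $\Ad(H)$-invariant subspace $c\supseteq\mathfrak{h}$ by $D_c(H)=dp(e)(c)$.

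First I would dispose of (A)$\iff$(B) by the Frobenius theorem: for $c$ a subalgebra the connected subgroup $K\subseteq G$ with Lie algebra $c$ has orbit $K/(K\cap H)$ an integral submanifold of $D_c$ through $eH$, and $G$-invariance propagates this to a foliation; conversely an integral leaf through $eH$ forces $[c,c]\subseteq c$. Thus $D_c$ is integrable iff $c$ is a Lie algebra, which is exactly the equivalence of (A) and (B).

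Next, (B)$\Rightarrow$(every $G$-invariant inner metric is Finslerian) is immediate: given any admissible $L_e$, condition (B) makes it a subalgebra, so $L_e=\langle L_e\rangle=\mathfrak{g}$ and $D=TM$. Equivalently, (A) gives this directly, since an integrable distribution $D$ satisfies $[D,D]\subseteq D$, so if it is also bracket-generating it must already be $TM$. For the converse I would argue by contraposition: if (B) fails, choose an $\Ad(H)$-invariant $c\supseteq\mathfrak{h}$ that is not a subalgebra, so $c\subsetneq\mathfrak{k}:=\langle c\rangle$. \emph{Here the compactness of $H$ is the key tool}: it lets me split off an $\Ad(H)$-invariant complement $\mathfrak{m}$ with $\mathfrak{g}=\mathfrak{k}\oplus\mathfrak{m}$ and set $L_e:=c\oplus\mathfrak{m}$. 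Then $L_e$ is $\Ad(H)$-invariant, contains $\mathfrak{h}$, and satisfies $\langle L_e\rangle\supseteq\langle c\rangle+\mathfrak{m}=\mathfrak{g}$, yet $\dim L_e=\dim c+\dim\mathfrak{m}<\dim\mathfrak{k}+\dim\mathfrak{m}=\dim\mathfrak{g}$, so $L_e\neq\mathfrak{g}$. Equipping $D_{L_e}$ with any $\Ad(H)$-invariant norm yields, via Theorem \ref{mman1}, a genuinely sub-Finslerian (non-Finslerian) $G$-invariant metric, contradicting the hypothesis.

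Finally, for the addendum I would use that a connected $H$ is generated by $\exp\mathfrak{h}$, so $\Ad(H)$ is generated by the $\exp(\ad X)$, $X\in\mathfrak{h}$; a subspace is invariant under all these exponentials iff it is $\ad(\mathfrak{h})$-invariant, i.e. iff $[\mathfrak{h},c]\subseteq c$. The simply connected case reduces to this via the homotopy exact sequence $\pi_1(G/H)\to\pi_0(H)\to\pi_0(G)$, which forces $H$ connected. \emph{The main obstacle} I anticipate is not any single implication but making the complement step robust: one must ensure the enlarged $L_e$ still bracket-generates while staying proper, which is exactly where the non-subalgebra gap $c\subsetneq\langle c\rangle$ and the $\Ad(H)$-invariant splitting coming from compactness of $H$ are used together; the Frobenius direction producing non-integrability from $[c,c]\not\subseteq c$ also requires the standard care in relating brackets of invariant vector fields to the bracket in $\mathfrak{g}$.
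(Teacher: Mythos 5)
Your proposal follows what is clearly the intended route: the paper itself gives no in-text proof of Theorem~\ref{main} (it defers to \cite{Ber-89.1}, \cite{Ber-89.2}), but those proofs run exactly through the machinery restated here as Theorems~\ref{mman1} and \ref{mman} --- every invariant inner metric comes from an admissible pair $(L_e,F_H)$, Finslerian means $L_e=\mathfrak{g}$, invariant distributions correspond to $\Ad(H)$-invariant subspaces $c\supseteq\mathfrak{h}$, and Frobenius (pulled back to the left-invariant distribution on $G$, since $\mathfrak{h}\subseteq c$) gives (A)$\iff$(B). Your contrapositive construction is sound: $\mathfrak{k}=\langle c\rangle$ is $\Ad(H)$-invariant because each $\Ad(h)$ is an automorphism, an averaged inner product over the compact group $H$ yields the invariant complement $\mathfrak{m}$, and $L_e=c\oplus\mathfrak{m}$ is proper, contains $\mathfrak{h}$, is $\Ad(H)$-invariant, and bracket-generates. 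The addendum via $\Ad(\exp X)=\exp(\ad X)$ and the exact sequence $\pi_1(G/H)\to\pi_0(H)\to\pi_0(G)$ is also correct.

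The one genuine gap is your final clause: having built $d_c$ from the proper subspace $L_e$, you assert it is ``genuinely sub-Finslerian (non-Finslerian)'' without argument. Theorem~\ref{mman1} tells you the metric is \emph{defined} by a proper distribution, not that it cannot also be induced by a norm on all of $TM$; a priori the same metric space could admit a Finslerian presentation, and ruling this out is where the actual work lies. The paper supplies the needed tool in its proof of Theorem~\ref{fins}: for a properly sub-Finslerian homogeneous metric the tangent cone is a Carnot group (Theorems~\ref{tc}, \ref{ccg}) whose Hausdorff dimension $\sum_{k=1}^{m}k\dim(L_k)$ strictly exceeds the topological dimension, whereas a Finslerian metric has these dimensions equal; alternatively, ball-box estimates show distances in directions transverse to $D$ scale like $t^{1/k}$ with $k>1$, which no norm-induced metric allows. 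One sentence invoking this (or the Hausdorff-dimension comparison) closes the argument; as written, the contradiction you claim has not yet been derived.
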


\begin{definition}
\label{miid}
The homogeneous manifold $G/H$ of a connected Lie group $G$ with 
a compact stabilizer $H$ is called \textit{homogeneous manifold 
with integrable invariant distributions}, shortly, HMIID, if it 
satisfies any of the equivalent conditions (A) or (B) from theorem~\ref{main}.
\end{definition}

\begin{theorem}
\label{lie}
\cite{Ber-89.1}, \cite{Ber-89.2}
The following conditions for a connected Lie group $G$ with the Lie algebra
$\mathfrak{g}$ are equivalent:

\begin{enumerate}
\item[1)] Every left-invariant inner metric on  the Lie group $G$ is Finslerian;

\item[2)] Every vector subspace of the Lie algebra $\mathfrak{g}$ is a Lie subalgebra in $\mathfrak{g};$

\item[3)] $\mathfrak{g}$ is one-dimensional or any two-dimensional vector
subspace in $\mathfrak{g}$ is a Lie subalgebra of $\mathfrak{g}$;

\item[4)] For any two elements $X,Y$ in $\mathfrak{g}$, the bracket $[X,Y]$ is a
linear combination of elements $X$ and $Y.$

\end{enumerate}
\end{theorem}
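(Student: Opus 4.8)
The plan is to derive the equivalence (1) $\Leftrightarrow$ (2) as the special case $H=\{e\}$ of Theorem~\ref{main}, and then to close the loop (2) $\Rightarrow$ (3) $\Rightarrow$ (4) $\Rightarrow$ (2) by elementary manipulations inside $\mathfrak{g}$. This separates the one piece of genuine geometric content (which is already packaged in Theorem~\ref{main}) from the purely linear-algebraic bookkeeping.

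For the first equivalence I would apply Theorem~\ref{main} with $H=\{e\}$, whose stabilizer is trivially compact. Then $G/H=G$, the $G$-invariant inner metrics on $G/H$ are exactly the left-invariant inner metrics on $G$, and $\mathfrak{h}=0$. Since $Ad(\{e\})$ is the identity map, every vector subspace $c\subset\mathfrak{g}$ is automatically $Ad(H)$-invariant and automatically contains $\mathfrak{h}=0$. Hence condition (B) of Theorem~\ref{main} reads precisely as ``every vector subspace of $\mathfrak{g}$ is a Lie subalgebra,'' i.e.\ condition (2), and Theorem~\ref{main} then yields (1) $\Leftrightarrow$ (2) with no further work.

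The remaining equivalences are pure Lie algebra. The implication (2) $\Rightarrow$ (3) is immediate, since if every subspace is a subalgebra then so is every two-dimensional one (the disjunct $\dim\mathfrak{g}=1$ is listed only for emphasis and is in any case vacuous for two-dimensional subspaces). For (3) $\Rightarrow$ (4) I would take arbitrary $X,Y\in\mathfrak{g}$: if $X$ and $Y$ are linearly dependent then $[X,Y]=0$, which is trivially a linear combination of $X$ and $Y$; if they are independent then $\operatorname{span}\{X,Y\}$ is two-dimensional, hence a subalgebra, so $[X,Y]\in\operatorname{span}\{X,Y\}$, which is exactly (4). Finally, for (4) $\Rightarrow$ (2), let $V\subseteq\mathfrak{g}$ be any subspace and $X,Y\in V$; by (4) the bracket $[X,Y]$ lies in $\operatorname{span}\{X,Y\}\subseteq V$, so $V$ is closed under the bracket and is a subalgebra.

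I expect no real obstacle here. The only step carrying substance is (1) $\Leftrightarrow$ (2), and even that is a transparent specialization rather than an independent argument; the main thing to check carefully is simply that the hypotheses of Theorem~\ref{main} (connected Lie group, compact stabilizer) hold for $H=\{e\}$, which is clear. The cyclic chain (2) $\Rightarrow$ (3) $\Rightarrow$ (4) $\Rightarrow$ (2) is routine, and the one minor subtlety—the linearly dependent case in (3) $\Rightarrow$ (4)—is dispatched by the observation that $[X,cX]=0$. Thus the whole statement reduces to Theorem~\ref{main} plus a short sequence of one-line verifications.
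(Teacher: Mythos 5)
Your proposal is correct, and it takes the route the paper's own arrangement suggests: the survey states Theorem~\ref{lie} without proof (citing \cite{Ber-89.1}, \cite{Ber-89.2}), but it places Theorem~\ref{main} immediately before it with the same attribution, and your specialization $H=\{e\}$ is exactly the intended link --- the trivial subgroup is compact, $G$-invariant inner metrics on $G/\{e\}=G$ under the canonical left action are precisely the left-invariant inner metrics, $\mathfrak{h}=0$ and $Ad(\{e\})$-invariance are vacuous, so condition (B) of Theorem~\ref{main} collapses to condition 2), yielding 1) $\Leftrightarrow$ 2). Your cycle 2) $\Rightarrow$ 3) $\Rightarrow$ 4) $\Rightarrow$ 2) is airtight, including the linearly dependent case $[X,cX]=0$ in 3) $\Rightarrow$ 4) and the vacuity of the one-dimensional disjunct; the only caveat, which you already flag, is that all the geometric substance is inherited from Theorem~\ref{main}, itself stated here without proof, but since Theorem~\ref{main} is logically prior in this paper your reduction introduces no circularity.
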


\begin{theorem}
\label{mil}
\cite{Mil}
If a Lie algebra $\mathfrak{g}$ satisfies condition 4) from theorem \ref{lie} then
\begin{enumerate}
\item[1)] there exists a linear map $l:\mathfrak{g}\rightarrow \mathbb{R}$ 
such that
\begin{equation}
\label{l}
[X,Y]=l(X)Y-l(Y)X, \quad X,Y \in \mathfrak{g};
\end{equation}

\item[2)] the kernel of the linear map $l$ is the maximal commutative ideal in 
$\mathfrak{g};$

\item[3)] $l=0$ if and only if $\mathfrak{g}$ is commutative Lie algebra;

\item[4)] if $l\neq 0$ then, up to an isomorphism, the Lie algebra 
$\mathfrak{g}$ has the form 
$$L_n = \mathbb{R} +_{\phi} \mathbb{R}^{n-1},\quad n\geq 2,$$ 
i.e. semidirect sum, prescribed by homomorphism 
$\phi : \mathbb{R} \to \text{End}(\mathbb{R}^{n-1})$, such that 
$\phi(1) = E_{n-1}$ is the unit matrix.
\end{enumerate}
\end{theorem}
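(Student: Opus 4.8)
The plan is to prove statement 1) first, since 2), 3), and 4) all follow from the bracket formula (\ref{l}) with little extra work. I would dispose of the low-dimensional cases at once: if $\dim\mathfrak{g}=1$ the bracket vanishes and any $l$ (in particular $l=0$) works, while if $\dim\mathfrak{g}=2$ then for a basis $e_1,e_2$ one has $[e_1,e_2]=pe_1+qe_2$, and setting $l(e_1)=q$, $l(e_2)=-p$ already reproduces (\ref{l}) (condition 4) being vacuous here). So the real content is the case $\dim\mathfrak{g}=n\geq 3$, and that is where I expect the main obstacle to lie.

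For $n\ge 3$ I would fix a basis $e_1,\dots,e_n$. Condition 4) applied to the pair $e_i,e_j$ forces $[e_i,e_j]=p_{ij}e_i+q_{ij}e_j$ with no other basis components, and antisymmetry gives $q_{ji}=-p_{ij}$. The key device is a \emph{polarization}: applying condition 4) to the pair $e_i+e_j,\,e_k$ for three distinct indices forces the $e_i$- and $e_j$-coefficients of $[e_i+e_j,e_k]$ to coincide, i.e. $p_{ik}=p_{jk}$, so $p_{ik}$ depends only on $k$; symmetrically, applying it to $e_i,\,e_j+e_k$ shows $q_{ij}$ depends only on $i$. Antisymmetry $q_{ji}=-p_{ij}$ then glues these two one-index families together: defining $l$ on the basis by $l(e_i):=q_{ij}$ (any $j\neq i$) and extending linearly, one verifies $-p_{ik}=l(e_k)$ as well, whence $[e_i,e_k]=l(e_i)e_k-l(e_k)e_i$, and bilinearity upgrades this to (\ref{l}) on all of $\mathfrak{g}$. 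I expect this polarization step to be the crux of the whole theorem; everything afterwards is formal. I would also point out that Jacobi is never used in this construction (only condition 4) and antisymmetry), and that the resulting bracket automatically satisfies Jacobi because $l([X,Y])=l(X)l(Y)-l(Y)l(X)=0$.

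Granting (\ref{l}), statement 3) is immediate: $l=0$ annihilates all brackets, and conversely a commutative $\mathfrak{g}$ gives $l(X)Y=l(Y)X$ for all $X,Y$, which on an independent pair forces $l\equiv 0$ (for $n\ge 2$). For statement 2), I would first check that $\ker l$ is an abelian ideal: $X,Y\in\ker l$ give $[X,Y]=l(X)Y-l(Y)X=0$, and $l([Z,X])=l(Z)l(X)=0$ shows $[Z,X]\in\ker l$ for every $Z$. For maximality, suppose $\mathfrak{a}$ is any abelian ideal; if some $X\in\mathfrak{a}$ had $l(X)\neq0$, then $[X,Y]=l(X)Y-l(Y)X\in\mathfrak{a}$ together with $l(Y)X\in\mathfrak{a}$ would force $Y\in\mathfrak{a}$ for all $Y$, i.e. $\mathfrak{a}=\mathfrak{g}$; but then $\mathfrak{g}$ is abelian and $l=0$ by 3), contradicting $l(X)\neq0$. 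Hence $\mathfrak{a}\subseteq\ker l$, so $\ker l$ is the unique maximal abelian ideal.

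Finally, for statement 4) I would assume $l\neq0$, so that $\ker l$ is a codimension-one abelian ideal, isomorphic as an algebra to $\mathbb{R}^{n-1}$. Choosing $X_0$ with $l(X_0)=1$ splits $\mathfrak{g}=\mathbb{R}X_0\oplus\ker l$, and for $V\in\ker l$ formula (\ref{l}) gives $[X_0,V]=l(X_0)V-l(V)X_0=V$; thus $\ad(X_0)$ acts as the identity $E_{n-1}$ on $\ker l$. This exhibits $\mathfrak{g}$ as the semidirect sum $\mathbb{R}+_{\phi}\mathbb{R}^{n-1}$ with $\phi(1)=E_{n-1}$, namely $L_n$, and since this description depends only on $n$ it pins down $\mathfrak{g}$ up to isomorphism.
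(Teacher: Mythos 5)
Your proof is correct; note that the paper itself gives no argument for this theorem (it is stated as a citation to Milnor's 1976 paper), and your route --- polarizing condition 4) on pairs $e_i+e_j,\,e_k$ to show the coefficients $p_{ik}$, $q_{ij}$ each depend on a single index, gluing them via antisymmetry to define $l$, then deriving 2)--4) formally from the formula (\ref{l}) --- is essentially Milnor's own proof of the corresponding lemma. Your handling of the degenerate cases $\dim\mathfrak{g}\le 2$ (where three distinct indices are unavailable and condition 4) is vacuous), the maximality argument for $\ker l$, and the observation that the Jacobi identity is never used in constructing $l$ are all sound.
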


\begin{theorem}
\label{mil1}
\cite{Mil}
Let $G_n$ be $n$-dimensional Lie group $G$ with the Lie algebra 
$\mathfrak{g}$, satisfying condition 4) from theorem \ref{lie}. Then
\begin{enumerate}
\item[1)] $G_n$ is commutative or

\item[2)] $G_n$ is isomorphic to the group of real $(n\times n)$ block matrices
\begin{equation}
\label{bd}
\left (\begin{array}{cc} aE_{n-1}&b \\0 & 1  \end{array}\right),
\end{equation}
where $E_{n-1}$ is the unit $(n-1)\times (n-1)$-matrix, $a$ is 
any positive number, $b$ is any $(n-1)$-vector-column, and
$0$ is the zero $(n-1)$-vector-row.
\end{enumerate}
\end{theorem}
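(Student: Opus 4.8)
The plan is to lift the Lie-algebra classification already furnished by Theorem~\ref{mil} to the level of the group, using throughout that $G_n$ is \emph{connected} (as in Theorem~\ref{lie}); connectedness is indispensable, since without it even the commutative case would admit many non-isomorphic groups differing in their components. Following Theorem~\ref{mil}, I would split into the two cases $l=0$ and $l\neq 0$.

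If $l=0$, then by part~3) of Theorem~\ref{mil} the algebra $\mathfrak{g}$ is commutative. For a connected Lie group a vanishing bracket forces commutativity of the group: the map $\exp\colon(\mathfrak{g},+)\to G_n$ is then a homomorphism whose image is a neighbourhood of the unit and hence generates the connected group $G_n$, so $G_n$ is abelian. This is alternative 1).

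If $l\neq 0$, then by part~4) of Theorem~\ref{mil} we have $\mathfrak{g}\cong L_n=\mathbb{R}+_{\phi}\mathbb{R}^{n-1}$ with $\phi(1)=E_{n-1}$, hence $\phi(t)=tE_{n-1}$ and $[(s,v),(t,w)]=(0,\,sw-tv)$, with $l$ the projection onto the first summand. I would next check that the candidate matrix group $M$ of the matrices~(\ref{bd}) has precisely this Lie algebra. Differentiating the entries $(a,b)$ at the unit $(1,0)$ identifies $\mathfrak{m}$ with the matrices $\left(\begin{smallmatrix}\alpha E_{n-1}&\beta\\0&0\end{smallmatrix}\right)$, and a direct matrix multiplication shows their commutator carries the pair $(\alpha,\beta),(\alpha',\beta')$ to $(0,\,\alpha\beta'-\alpha'\beta)$, which is exactly the bracket of $L_n$; thus $\mathfrak{m}\cong L_n$. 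The group law reads $(a,b)\cdot(a',b')=(aa',\,ab'+b)$, so $M=\mathbb{R}_{>0}\ltimes\mathbb{R}^{n-1}$ with $\mathbb{R}_{>0}$ acting by scalar dilation.

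The step I expect to be the crux is the passage from the isomorphism $\mathfrak{g}\cong L_n$ of algebras to the asserted isomorphism $G_n\cong M$ of groups, since $G_n$ is not known a priori to be simply connected and several connected groups could in principle realize $L_n$. To close this gap I would establish two properties of $M$. First, $M$ is simply connected: as a smooth manifold it is $\mathbb{R}_{>0}\times\mathbb{R}^{n-1}\cong\mathbb{R}^{n}$ (through $a=e^{t}$), hence contractible. Second, the center of $M$ is trivial: the centrality condition $(aa',\,ab'+b)=(a'a,\,a'b+b')$ for all $(a',b')$ forces $a=1$ (take $a'=1$) and then $b=0$ (take arbitrary $a'>0$), so $Z(M)=\{(1,0)\}$. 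Consequently $M$ is \emph{the} simply connected group with algebra $L_n$ and has no nontrivial discrete central subgroups, whence every connected Lie group with algebra $L_n$ is a quotient $M/\Gamma$ with $\Gamma\subseteq Z(M)=\{e\}$ and therefore equals $M$. Since $G_n$ is connected with $\mathfrak{g}\cong L_n$, this gives $G_n\cong M$, which is alternative 2), completing the plan.
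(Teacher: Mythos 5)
Your proposal is correct and complete. Note that the paper itself gives no proof of this theorem --- it is quoted from Milnor's paper \cite{Mil} --- so there is no internal argument to compare against; your route is the standard one and closes the genuine gap you correctly identified, namely that an isomorphism of Lie algebras alone does not determine the group. Your three verifications all check out: the matrix algebra computation $[(\alpha,\beta),(\alpha',\beta')]=(0,\alpha\beta'-\alpha'\beta)$ does reproduce the bracket of $L_n=\mathbb{R}+_{\phi}\mathbb{R}^{n-1}$ with $\phi(t)=tE_{n-1}$; the group $M$ of matrices (\ref{bd}) is diffeomorphic to $\mathbb{R}^{n}$ and hence simply connected; and the center computation is valid (observe that killing $a$ requires a nonzero $b'$, i.e.\ $n\geq 2$, which is guaranteed in the $l\neq 0$ case by part 4) of Theorem \ref{mil}). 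Since every connected group with algebra $L_n$ is then $M/\Gamma$ with $\Gamma$ a discrete normal, hence central, subgroup of $M$, and $Z(M)$ is trivial, the conclusion $G_n\cong M$ follows. You were also right to make explicit that connectedness of $G_n$, inherited from the setting of Theorem \ref{lie}, is indispensable in both branches of the dichotomy.
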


\begin{theorem}
\label{mil2}
\cite{Mil}
Noncommutative Lie group $G$ has the Lie algebra $\mathfrak{g}$ 
satisfying condition 4) from theorem \ref{lie} if and only if 
any left-invariant Riemannian metric on $G$ has constant negative curvature.
\end{theorem}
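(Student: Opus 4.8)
The plan is to treat the two implications separately, using Theorems~\ref{mil} and~\ref{mil1} to pin down the algebraic model and then computing the Levi--Civita curvature of an arbitrary left-invariant metric by hand.

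For the forward implication I would start from condition 4), which by Theorem~\ref{mil} means the bracket has the form $[X,Y]=l(X)Y-l(Y)X$ for a nonzero linear functional $l$ (nonzero since $G$ is noncommutative). Fix an arbitrary inner product $\langle\cdot,\cdot\rangle$ on $\mathfrak g$, let $c=|l|$ be the dual norm of $l$, and choose an orthonormal basis $e_1,\dots,e_{n-1},e_n$ with $e_1,\dots,e_{n-1}$ spanning $\ker l=:\mathfrak n$ and $e_n$ the unit gradient of $l$, so that $l(e_n)=c$ and $l(e_i)=0$ for $i<n$. The bracket then collapses to $[e_n,e_i]=c\,e_i$ for $i<n$ and $[e_i,e_j]=0$ for $i,j<n$; that is, $\mathfrak n$ is an abelian ideal and $\ad_{e_n}=c\cdot\mathrm{Id}$ on it. I would feed this into Koszul's formula $2\langle\nabla_XY,Z\rangle=\langle[X,Y],Z\rangle-\langle[Y,Z],X\rangle+\langle[Z,X],Y\rangle$ to obtain $\nabla_{e_i}e_j=c\,\delta_{ij}e_n$ and $\nabla_{e_i}e_n=-c\,e_i$ for $i,j<n$, while $\nabla_{e_n}e_j=0$ and $\nabla_{e_n}e_n=0$. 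A short evaluation of $R(X,Y)Z=\nabla_X\nabla_YZ-\nabla_Y\nabla_XZ-\nabla_{[X,Y]}Z$ over all index combinations then yields exactly
\[
R(X,Y)Z=-c^2\bigl(\langle Y,Z\rangle X-\langle X,Z\rangle Y\bigr),
\]
the curvature tensor of a space of constant sectional curvature $-c^2<0$. As the inner product was arbitrary, every left-invariant Riemannian metric has constant negative curvature.

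For the converse I would show that the constant-curvature hypothesis forces the bracket back into the shape $[X,Y]=l(X)Y-l(Y)X$. Fix one left-invariant metric; by hypothesis it has constant curvature $\kappa<0$ (negative because $G$ is noncommutative and hence not flat), so $R(X,Y)Z=\kappa(\langle Y,Z\rangle X-\langle X,Z\rangle Y)$ identically. Since left-invariant metrics are complete, the Riemannian universal cover of $(G,\rho_0)$ is the simply connected space form $\mathbb H^n(\kappa)$, on which the universal cover $\tilde G$ acts simply transitively by isometries through left translations. Identifying $\tilde G$ with a simply transitive subgroup of $\mathrm{Isom}(\mathbb H^n)$ and using the Iwasawa $AN$-description of such groups --- $\mathfrak a=\mathbb R$ acting by a single positive root on the abelian nilradical $\mathfrak n=\mathbb R^{n-1}$ --- gives $[H,X]=c\,X$ for $H$ spanning $\mathfrak a$ and $X\in\mathfrak n$, which is precisely condition 4). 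Equivalently, and more self-containedly, one inserts the Koszul expression for $R(X,Y)Z$ into the constant-curvature identity and solves the resulting quadratic relations among the structure constants, the adapted basis above showing that these relations admit only the $l$-bracket.

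The main obstacle is the converse. The connection and curvature computation in the forward direction is routine once the frame is adapted to $\ker l$; the real content is that constant negative curvature is rigid enough to recover the algebraic form of the bracket. The delicate point is ruling out exotic simply transitive isometry groups of $\mathbb H^n$ (equivalently, showing the nonlinear curvature equations have no solutions beyond the $l$-bracket), which I would handle either by invoking the classification of simply transitive groups of motions of real hyperbolic space or by a direct elimination argument on the structure constants in an orthonormal frame.
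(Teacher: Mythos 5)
Your forward implication is correct and complete: the paper itself offers no proof of this theorem (it is a survey statement cited to \cite{Mil}), and your adapted-frame Koszul computation --- $[e_n,e_i]=c\,e_i$ on $\ker l^{\perp}\oplus\ker l$, giving $\nabla_{e_i}e_j=c\,\delta_{ij}e_n$, $\nabla_{e_i}e_n=-c\,e_i$, $\nabla_{e_n}\cdot=0$ and hence $R(X,Y)Z=-c^2\bigl(\langle Y,Z\rangle X-\langle X,Z\rangle Y\bigr)$ --- is exactly the standard argument and is verifiably right for every inner product, with $c=|l|>0$ since $G$ is noncommutative.

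The converse, however, has a genuine gap, and it is not a technicality you can ``handle'' as you propose: the exotic simply transitive isometry groups of $\mathbb{H}^n$ that you hope to rule out actually \emph{exist} for $n\geq 3$. In the upper half-space model of $\mathbb{H}^3$, take $g_{(b,s)}:(z,t)\mapsto\bigl(e^{(1+i\alpha)s}z+b,\ e^{s}t\bigr)$ with $b\in\mathbb{C}$, $s\in\mathbb{R}$, $\alpha\neq 0$; this group $G_\alpha\cong\mathbb{R}\ltimes_{\phi}\mathbb{R}^{2}$ with $\phi(s)=e^{s}R_{\alpha s}$ (screw motions plus horizontal translations) acts simply transitively on $\mathbb{H}^3$, so it carries a left-invariant metric of constant curvature $-1$; yet its bracket $[H,X]=X+\alpha JX$ (with $J$ the rotation by $\pi/2$) is not a linear combination of $H$ and $X$, so condition 4) fails. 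Consequently your single-metric strategy cannot succeed in either variant: a simply transitive subgroup of $\mathrm{Isom}(\mathbb{H}^n)$ need not be the Iwasawa $AN$ group (the general ones are $\mathbb{R}\ltimes_{e^{tD}}\mathbb{R}^{n-1}$ with $D=\mathrm{Id}+S$, $S$ skew-symmetric), and the ``quadratic relations among structure constants'' extracted from one constant-curvature metric \emph{do} admit solutions other than the $l$-bracket --- $G_\alpha$ is one. The hypothesis of the theorem is that \emph{every} left-invariant metric has constant negative curvature, and a correct proof must use this universal quantifier: after identifying the one-metric models as the groups with $D=\mathrm{Id}+S$, you must show that whenever $S\neq 0$ some \emph{other} left-invariant metric (for instance, one rescaling $\mathbb{R}^{n-1}$ anisotropically, so that $D$ is no longer the identity plus a skew part with respect to the new inner product) fails to have constant curvature; only then does $S=0$, i.e.\ condition 4), follow. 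As written, your converse proves a false statement (``one constant-curvature metric implies the $l$-bracket''), so this step needs to be rebuilt around the full hypothesis.
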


\begin{theorem}
\label{sim}
\cite{Ber-89.1}, \cite{Ber-89.2}
Let $M$ be a connected Riemannian symmetric space, $G$ be the maximal connected
Lie group of isometries for $M$ with the stabilizer $H\subset G$ at a point 
$x\in M.$ Then any $G$-invariant inner metric on $G/H$ is Finslerian.
\end{theorem}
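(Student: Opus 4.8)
The plan is to reduce the statement to the algebraic criterion (B) of Theorem~\ref{main} and then verify it directly from the structure of a symmetric pair. Since $M$ is a connected Riemannian symmetric space and $G$ is its maximal connected isometry group with isotropy subgroup $H$ at $x$, the geodesic symmetry $s_x$ at $x$ induces an involutive automorphism $\sigma$ of $G$ fixing $H$ pointwise on the identity component of $G^\sigma$, and $H$ is compact (being a closed subgroup of the orthogonal group of $T_xM$). At the Lie algebra level this yields the Cartan decomposition $\mathfrak{g}=\mathfrak{h}\oplus\mathfrak{m}$ into the $(+1)$- and $(-1)$-eigenspaces of $d\sigma$, with the defining bracket relations
$$[\mathfrak{h},\mathfrak{h}]\subset\mathfrak{h},\qquad [\mathfrak{h},\mathfrak{m}]\subset\mathfrak{m},\qquad [\mathfrak{m},\mathfrak{m}]\subset\mathfrak{h};$$
the last inclusion is precisely the feature that distinguishes symmetric spaces, and it is what drives the whole argument.

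By Theorem~\ref{main} it suffices to establish condition (B): every $Ad(H)$-invariant subspace $c\subset\mathfrak{g}$ containing $\mathfrak{h}$ is a subalgebra. First I would observe that both $\mathfrak{h}$ and $\mathfrak{m}$ are $Ad(H)$-invariant, since $\sigma$, and hence $d\sigma$, commutes with $Ad(H)$; thus the decomposition $\mathfrak{g}=\mathfrak{h}\oplus\mathfrak{m}$ is itself $Ad(H)$-invariant. Consequently any $c\supset\mathfrak{h}$ splits as $c=\mathfrak{h}\oplus V$ with $V:=c\cap\mathfrak{m}$, and when $c$ is $Ad(H)$-invariant the summand $V$ is an $Ad(H)$-invariant subspace of $\mathfrak{m}$.

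It then remains to check $[c,c]\subset c$ by expanding $[\mathfrak{h}+V,\mathfrak{h}+V]$ into three pieces. The term $[\mathfrak{h},\mathfrak{h}]\subset\mathfrak{h}\subset c$ is immediate; the term $[V,V]\subset[\mathfrak{m},\mathfrak{m}]\subset\mathfrak{h}\subset c$ uses exactly the symmetric-space relation; and the term $[\mathfrak{h},V]\subset V\subset c$ follows from the $Ad(H)$-invariance of $V$, since this invariance descends to $ad(\mathfrak{h})$-invariance (automatic when $H$ is connected, and in general because $Ad(H)$-invariance of $V$ forces $Ad(H_0)$-invariance and hence $ad(\mathfrak{h})V\subset V$). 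Therefore $[c,c]\subset c$, condition (B) holds, and by Theorem~\ref{main} every $G$-invariant inner metric on $G/H$ is Finslerian.

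The step I expect to require the most care is the very first one: justifying that the maximal connected isometry group $G$ genuinely equips $G/H$ with a symmetric-pair structure yielding $[\mathfrak{m},\mathfrak{m}]\subset\mathfrak{h}$. This is where the hypothesis that $G$ is the \emph{maximal} connected isometry group (rather than an arbitrary transitive subgroup) is essential, since it guarantees that the geodesic symmetries lie in $G$ and define the involution $\sigma$ whose eigenspace decomposition gives the three bracket relations. Once the Cartan decomposition is established, the remaining bracket computations are routine, and the proof reduces to the single observation that $[\mathfrak{m},\mathfrak{m}]\subset\mathfrak{h}$ makes the integrability criterion (B) hold automatically.
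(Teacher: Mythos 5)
Your argument is correct, and it is evidently the intended one: the paper itself gives no proof of Theorem~\ref{sim} (as a survey it only cites \cite{Ber-89.1}, \cite{Ber-89.2}), but its placement immediately after Theorem~\ref{main} signals exactly your reduction to condition (B). Your verification is complete: the splitting $c=\mathfrak{h}\oplus V$ with $V=c\cap\mathfrak{m}$ holds for any subspace $c\supset\mathfrak{h}$ (it uses only $\mathfrak{h}\subset c$, since subtracting the $\mathfrak{h}$-component of an element of $c$ stays in $c$; the $\Ad(H)$-invariance of the Cartan decomposition is then what makes $V$ invariant), $[V,V]\subset[\mathfrak{m},\mathfrak{m}]\subset\mathfrak{h}\subset c$ is the symmetric-space relation, and $[\mathfrak{h},V]\subset V$ follows by differentiating the $\Ad(H_0)$-invariance of $V$ along one-parameter subgroups of $H$.

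One imprecision worth fixing, though it sits in your closing commentary rather than in the proof body: maximality of $G$ does \emph{not} guarantee that the geodesic symmetry $s_x$ lies in $G$. For $M=\mathbb{R}^n$ with $n$ odd (or real hyperbolic space of odd dimension), $ds_x=-I$ is orientation-reversing and $s_x\notin I_0(M)$. What maximality actually buys is that $G=I_0(M)$, being the identity component of the full isometry group, is stable under conjugation by $s_x$, so $\sigma=\mathrm{conj}_{s_x}$ restricts to an involutive automorphism of $G$ even when $s_x\notin G$; and $H\subset G^{\sigma}$ follows because $s_x h s_x^{-1}$ fixes $x$ with the same differential as $h$, hence equals $h$ by uniqueness of an isometry of a connected manifold with prescribed value and differential at a point. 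That maximality is genuinely needed in the statement is shown by $S^3=SU(2)$ acting on itself by left translations: the underlying space is symmetric, yet a two-dimensional subspace of $\mathfrak{su}(2)$ is never a subalgebra, so condition (B) fails for this smaller transitive group and non-Finslerian invariant sub-Finslerian metrics exist. With this repair your proof stands as written.
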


\begin{theorem}
\label{sii}
\cite{Ber-89.1}, \cite{Ber-89.2}
Let assume that $M = G/H$ (where $G$ is a connected Lie group and 
$H$ is its compact subgroup) be isotropy irreducible homogeneous 
spaces, i.e. $G/H$ has irreducible linear isotropy group. Then 
any $G$-invariant inner metric on $G/H$ is Finslerian.
\end{theorem}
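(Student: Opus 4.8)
The plan is to bypass metrics and distributions entirely and reduce the claim to the purely algebraic criterion of Theorem~\ref{main}. By that theorem, every $G$-invariant inner metric on $G/H$ is Finslerian as soon as condition (B) holds, namely that every $Ad(H)$-invariant vector subspace $c\subset\mathfrak{g}$ with $\mathfrak{h}\subset c$ is a Lie subalgebra. So the whole task is to classify the intermediate $Ad(H)$-invariant subspaces and check they are automatically subalgebras.

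First I would fix an $Ad(H)$-invariant complement $\mathfrak{m}$ to $\mathfrak{h}$ in $\mathfrak{g}$, which exists because $H$ is compact: $\mathfrak{g}=\mathfrak{h}\oplus\mathfrak{m}$ with $Ad(H)\mathfrak{m}=\mathfrak{m}$. Under the canonical identification of $\mathfrak{m}$ with the tangent space $T_H(G/H)=\mathfrak{g}/\mathfrak{h}$, the restriction of $Ad(H)$ to $\mathfrak{m}$ is exactly the linear isotropy representation. The hypothesis that $G/H$ is isotropy irreducible therefore says precisely that the only $Ad(H)$-invariant subspaces of $\mathfrak{m}$ are $\{0\}$ and $\mathfrak{m}$.

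Next I would pin down the subspaces $c$ in condition (B). Since $\mathfrak{h}$ and $\mathfrak{m}$ are both $Ad(H)$-invariant and $c\supset\mathfrak{h}$ is $Ad(H)$-invariant, intersecting with the two summands gives $c=\mathfrak{h}\oplus(c\cap\mathfrak{m})$, where $c\cap\mathfrak{m}$ is an $Ad(H)$-invariant subspace of $\mathfrak{m}$. By irreducibility $c\cap\mathfrak{m}$ is either $\{0\}$ or all of $\mathfrak{m}$, so $c=\mathfrak{h}$ or $c=\mathfrak{g}$. Both are Lie subalgebras of $\mathfrak{g}$, the first being the Lie algebra of $H$ and the second the whole of $\mathfrak{g}$. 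Hence condition (B) is satisfied, and Theorem~\ref{main} yields the conclusion.

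I expect the argument to be essentially this short, since all the analytic content (the passage from invariant inner metrics to the algebraic condition (B)) is already carried by Theorem~\ref{main}. The one point demanding care is the correct reading of ''irreducible linear isotropy group'': the subtle step is identifying the isotropy representation of $H$ on $T_H(G/H)$ with the $Ad(H)$-action on the complement $\mathfrak{m}$, so that the irreducibility hypothesis translates \emph{exactly} into the absence of proper nonzero $Ad(H)$-invariant subspaces of $\mathfrak{m}$. That translation is what drives the entire classification, and it is also where one must be sure the relevant irreducibility is with respect to the full group $Ad(H)$ rather than merely its identity component, matching the group appearing in condition (B).
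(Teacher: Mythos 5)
Your proof is correct and follows exactly the route the paper intends: Theorem~\ref{sii} is presented as a direct consequence of the criterion (B) of Theorem~\ref{main}, with isotropy irreducibility (read, via an $\Ad(H)$-invariant complement $\mathfrak{m}$, as irreducibility of $\Ad(H)|_{\mathfrak{m}}$) forcing every intermediate $\Ad(H)$-invariant subspace to be $\mathfrak{h}$ or $\mathfrak{g}$, both trivially subalgebras. Your attention to identifying the linear isotropy representation with $\Ad(H)$ on $\mathfrak{m}$, for the full group $H$ rather than $H_0$, is exactly the right point of care.
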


\begin{theorem}
\label{comp}
\cite{Ber-92}, \cite{Ber-95}, \cite{Ber-96}
For any (compact) simply connected effective homogeneous space 
$G/H$ of a connected compact Lie group $G$ with closed
stabilizer $H$ the following conditions are equivalent:
\begin{enumerate}
\item[1)] all $G$-invariant distributions on $G/H$ are integrable;

\item[2)] the homogeneous space $G/H$ is isomorphic to a direct 
product of compact simply connected isotropy irreducible homogeneous spaces;

\item[3)] the space $G/H$ has normal type by Berard-Bergery \cite{Berg}, i.e. any
$G$-invariant Riemannian metric on $G/H$ is normal homogeneous in M.Berger's sense;

\item[4)] all $G$-invariant Riemannian metrics on $G/H$ have positive Ricci curvature;

\item[5)] all $G$-invariant Riemannian metrics on $G/H$ have positive 
scalar curvature.
\end{enumerate}
\end{theorem}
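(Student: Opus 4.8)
The plan is to prove the cyclic chain $(1)\Rightarrow(2)\Rightarrow(3)\Rightarrow(4)\Rightarrow(5)\Rightarrow(1)$ after normalizing the setup. Since $G/H$ is compact, simply connected and effective, I may assume $G$ is a compact semisimple Lie group; fix the bi-invariant inner product $Q$ coming from (minus) the Killing form and write the $Q$-orthogonal reductive decomposition $\mathfrak{g}=\mathfrak{h}\oplus\mathfrak{m}$ with $\mathfrak{m}=\mathfrak{m}_1\oplus\cdots\oplus\mathfrak{m}_r$ a splitting into $\operatorname{Ad}(H)$-irreducible summands. Condition (1) is exactly condition (A) of Theorem~\ref{main}, hence equivalent to (B): every $\operatorname{Ad}(H)$-invariant subspace of $\mathfrak{g}$ containing $\mathfrak{h}$ is a subalgebra. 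I will use (B) as the working form of (1) throughout, and Theorem~\ref{sii} to recognize the eventual factors as isotropy irreducible, hence HMIID.

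For $(1)\Rightarrow(2)$, the structural core, I start from (B): each $\mathfrak{h}\oplus\mathfrak{m}_i$ and each $\mathfrak{h}\oplus\mathfrak{m}_i\oplus\mathfrak{m}_k$ is a subalgebra. Combining these with the bi-invariance $Q([X,Y],Z)=-Q(Y,[X,Z])$, a short adjoint-operator computation shows first that $[\mathfrak{m}_i,\mathfrak{m}_j]\subseteq\mathfrak{h}$ and then, by testing against $\mathfrak{h}$, that in fact $[\mathfrak{m}_i,\mathfrak{m}_j]=0$ for all $i\neq j$. Letting $\mathfrak{h}_i$ be the subalgebra of $\mathfrak{h}$ generated by the $\mathfrak{h}$-component of $[\mathfrak{m}_i,\mathfrak{m}_i]$, the spaces $\hat{\mathfrak{g}}_i=\mathfrak{h}_i\oplus\mathfrak{m}_i$ become pairwise commuting subalgebras, the vanishing $[\mathfrak{h}_i,\mathfrak{m}_j]=0$ for $i\neq j$ following from a Jacobi-identity argument. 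Effectiveness forces the ideal generated by $\mathfrak{m}$ to be all of $\mathfrak{g}$, so $\mathfrak{g}=\bigoplus_i\hat{\mathfrak{g}}_i$ and $\mathfrak{h}=\bigoplus_i\mathfrak{h}_i$; integrating yields $G/H\cong\prod_i \hat G_i/H_i$ with each factor isotropy irreducible and simply connected, which is (2). The converse $(2)\Rightarrow(1)$, though subsumed by the cycle, is immediate: the $\mathfrak{m}_i$ of distinct factors are inequivalent $\operatorname{Ad}(H)$-modules, so every invariant subspace containing $\mathfrak{h}$ is a partial sum $\mathfrak{h}\oplus\bigoplus_{i\in S}\mathfrak{m}_i$, visibly a subalgebra.

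The curvature block $(2)\Rightarrow(3)\Rightarrow(4)\Rightarrow(5)$ is then routine. An isotropy irreducible space carries, up to scale, a unique invariant metric, namely the normal one induced by $Q$; across distinct factors the isotropy summands are inequivalent, so every invariant metric on the product is an orthogonal sum of independently rescaled factor metrics, and this rescaling freedom is exactly that of choosing a bi-invariant metric on $G=\prod\hat G_i$. Hence every invariant metric is normal homogeneous, i.e. (3). For $(3)\Rightarrow(4)$, a normal homogeneous metric on compact semisimple $G$ satisfies $\operatorname{Ric}(X,X)=\tfrac14\sum_i|[X,e_i]_{\mathfrak{m}}|^2+\tfrac12\sum_\alpha|[X,f_\alpha]|^2\ge 0$, with equality only if $X$ is $\mathfrak{g}$-central; effectiveness of a semisimple group excludes this, so $\operatorname{Ric}>0$, and taking the trace gives $(4)\Rightarrow(5)$.

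The implication $(5)\Rightarrow(1)$, which closes the cycle, is where I expect the real work. I argue by contraposition: if (1) fails then (B) fails, and a partial-sum test shows this is equivalent to the existence of a nonzero structure triple $[ijk]:=\sum|Q([e_\alpha,e_\beta],e_\gamma)|^2>0$ (bases running over $\mathfrak{m}_i,\mathfrak{m}_j,\mathfrak{m}_k$) with $k\notin\{i,j\}$, i.e. some invariant distribution is genuinely non-integrable. I then invoke the classical Wang--Ziller expression for the scalar curvature of the invariant metric $\sum_k x_k\,Q|_{\mathfrak{m}_k}$,
\begin{equation*}
S=\tfrac12\sum_k \frac{d_k b_k}{x_k}-\tfrac14\sum_{i,j,k}[ijk]\,\frac{x_k}{x_i x_j},
\end{equation*}
and scale only the offending summand, $x_k\to\infty$ with the others fixed. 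The first sum stays bounded, every subtracted term in which $k$ sits in a denominator tends to $0$, while $[ijk]\,x_k/(x_ix_j)$ grows linearly, so $S\to-\infty$, producing an invariant metric of negative scalar curvature and contradicting (5). The main obstacles are to make the dictionary ``failure of (B) $\Leftrightarrow$ nonzero off-diagonal triple'' precise, and above all to control the Wang--Ziller formula carefully enough that no competing term offsets the linear growth of the chosen one; the Ricci positivity in $(3)\Rightarrow(4)$ likewise requires semisimplicity and effectiveness to be invoked exactly in the directions where equality threatens.
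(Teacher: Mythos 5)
You should first note a structural point: the survey you were given contains no proof of Theorem~\ref{comp} at all --- it is stated with references to \cite{Ber-92}, \cite{Ber-95}, \cite{Ber-96}, so there is no internal argument to compare against line by line. Judged on its own merits, your cyclic scheme $(1)\Rightarrow(2)\Rightarrow(3)\Rightarrow(4)\Rightarrow(5)\Rightarrow(1)$ is correct and is in fact the strategy of the cited original papers: the algebraic splitting via a bi-invariant form $Q$ (condition (B) of Theorem~\ref{main} applied to the partial sums $\mathfrak{h}\oplus\mathfrak{m}_i$ and $\mathfrak{h}\oplus\mathfrak{m}_i\oplus\mathfrak{m}_k$ does force $[\mathfrak{m}_i,\mathfrak{m}_j]\perp\mathfrak{m}$ and then $[\mathfrak{m}_i,\mathfrak{m}_j]\perp\mathfrak{h}$, hence $=0$), the Berger-type Ricci positivity of normal homogeneous metrics over a compact semisimple group, and the scaling of one diagonal coefficient in the Wang--Ziller scalar curvature formula to drive $S\to-\infty$ when a nonzero off-diagonal triple $[ijk]$, $k\notin\{i,j\}$, exists --- this last device is precisely how the scalar-curvature equivalence 5) is handled in \cite{Ber-95}, \cite{Ber-96}. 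Three small justifications should be made explicit to make your sketch airtight. First, in $(2)\Rightarrow(1)$ and $(2)\Rightarrow(3)$ you assert that the isotropy modules of distinct factors are inequivalent; an equivariant isomorphism between $\mathfrak{m}_i$ (on which $H_j$, $j\neq i$, acts trivially) and $\mathfrak{m}_j$ would force both to be trivial irreducible modules, hence one-dimensional, which is excluded exactly by compactness plus simple connectivity (no circle factors) --- this is where those hypotheses enter and it deserves a sentence. Second, in $(1)\Rightarrow(2)$ effectiveness is needed twice, not once: besides forcing $\sum_i\hat{\mathfrak{g}}_i=\mathfrak{g}$, it is needed for \emph{directness} of the sum, since an element of $\mathfrak{h}_i\cap\mathfrak{h}_j$ ($i\neq j$) centralizes all of $\mathfrak{m}$, and the centralizer of $\mathfrak{m}$ inside $\mathfrak{h}$ is an ideal of $\mathfrak{g}$ contained in $\mathfrak{h}$, hence zero. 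Third, both the ``unique invariant metric up to scale'' claim and the very definition of the constants $b_i$ in the Wang--Ziller formula (via $B|_{\mathfrak{m}_i}=-b_i\,Q|_{\mathfrak{m}_i}$) rest on the fact that invariant \emph{symmetric} bilinear forms on a real-irreducible module form a one-dimensional space even when the commutant is $\mathbb{C}$ or $\mathbb{H}$ (the extra commutant elements can be arranged to be skew-adjoint); since your whole $(5)\Rightarrow(1)$ step runs through that formula, this Schur-type lemma should be stated. With these insertions your proposal is a faithful reconstruction of the proof in the literature the theorem points to.
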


Simply connected irreducible (Riemannian) symmetric spaces  $G/H$ with connected Lie group $G$ and compact subgroup $H$ have been classified by E~Cartan in 1926, see \cite{Bes2}. They are (automatically) strictly isotropy irreducible, i.e. 
$H_0$ has irreducible isotropy representation; non-compact strictly isotropy irreducible homogeneous spaces are symmetric \cite{Man}. O.~Manturov \cite{Man} and J.~A.~Wolf \cite{W1} classified strictly isotropy irreducible homogeneous spaces; 
one needs to combine their results to get full classification; see also \cite{Bes2}. 

The author used no classification when he proved his results stated in this paper.  
It follows from previous statements that there is a full classification of
compact simply connected HMIID.

V.~V.~Gorbatsevich \cite{vgorvich1}, \cite{BGorb-14} studied  general 
homogeneous spaces with connected stabilizer subgroup from the class 
HMIID in detail. He described corresponding transitive Lie groups and stabilizer 
subgroups in the case when the transitive group is semisimple or solvable, 
and partly, in the case of general transitive Lie groups.


\begin{thebibliography}{99}

\bibitem{Mont}
{\sl Montgomery~R.},
A tour of sub-Riemannian geometries, their geodesics and applications. 
Ma\-the\-ma\-tical Surveys and Monographs, 91.
Amer. Math. Soc., Providence, RI, 2002.

\bibitem{Jur}
{\sl Jurdjevich~V.},
Geometric control theory. Cambridge Studies in Advanced Mathematics, 52. 
Cambridge Univ. Press, Cambridge, 1997. 

\bibitem{AS}
{\sl Agrachev~A.A., Sachkov~Yu.L.}, Control theory from the geometric 
viewpoint. Encyclopaedia of Mathematical
Sciences, 87. Control Theory and Optimization, II. Springer-Verlag, Berlin, 2004.

\bibitem{Ber-89.2} 
{\sl Berestovskii~V.N.},
Homogeneous spaces with intrinsic metric, Soviet Math. Dokl. 38 (1989), 60-63.


\bibitem{Ber-88} 
{\sl Berestovskii~V.N.},
Homogeneous manifolds with an intrinsic metric. I, Siber. Math. 
J. 29 (1988), no. 6, 887-897.

\bibitem{Ber-89}
{\sl Berestovskii~V.N.},
The structure of locally compact homogeneous spaces with an 
intrinsic metric, Siber. Math. J. 30 (1989), no. 1, 16-25.

\bibitem{Ber-89.1} 
{\sl Berestovskii~V.N.},
Homogeneous manifolds with an intrinsic metric. II, Siber. 
Math. J. 30 (1989), no. 2, 180-191.


\bibitem{BGorb-14} 
{\sl Berestovskii~V.N., Gorbatsevich~V.V.},
Homogeneous spaces with inner metric and with integrable invariant distributions,
Analysis and Mathematical Physics, 2014, v. 4, no. 4, p. 263-331. 
DOI: 10.1007/s13324-0083-z.

\bibitem{CV}
{\sl Cohn-Vossen~S.},
Existenz k\"urzester Wege, (German) Compositio Math. 1936, V. 3, P. 441-452. 

\bibitem{Bu1}
{\sl Busemann~H.}, The geometry of geodesics.
Academic Press Inc., New York, N. Y., 1955. 

\bibitem{Ber}
{\sl Berestovskii~V.N.},
Homogeneous Busemann $G$-spaces, Sib. Mat. Zh.
1982, V. 23, no. 2, P. 3-15. (Russian). 

\bibitem{BerP}
{\sl Berestovskii~V.N., Plaut~C.}, Homogeneous Spaces of Curvature 
Bounded Below, J. of Geom. Anal., 1999, V. 9, no. 2, P. 203-219.

\bibitem{Iw}
{\sl Iwasawa~K.},
On some types of topological groups, Ann. Math. (2), 1949, 
V. 50, no. 4, P. 507-558. 

\bibitem{Gl}
{\sl Gleason~A.},
The structure of locally compact groups, Duke Math. J., 1951, V. 18, P. 85-105. 
 

\bibitem{Ya}
{\sl Yamabe~H.},
A generalization of a theorem of Gleason, Ann. of Math. (2), 1953, 
V. 58, no. 2, P. 351-365. 

\bibitem{Glu}
{\sl Glushkov~V.M.},
Structure of locally bicompact groups and fifth Hilbert problem 
(Russian), Uspekhi Mat. Nauk, 1957, V. 12, no. 2, P. 3-41.


\bibitem{BG-00} 
{\sl Berestovskii~V.N., Guijarro~L.},
A Metric Characterization of Riemannian Submersions, Annals of 
Global Analysis and Geometry 2000:18, 577-588.

\bibitem{Kur}
{\sl Kuratowski~K.},
Topology. Vol. I. Academic Press, New York-London; Pa\'nstwowe 
Wydawnictwo Naukowe, Warsaw 1966.

\bibitem{Sze}
{\sl Szenthe~J.},
On the topological characterization of transitive Lie group actions,
Acta Sci. Math. (Szeged) 1974, V. 36, P. 323-344.

\bibitem{Ant}
{\sl Antonyan~S.A.}, Characterizing maximal compact subgroups, 
Arch. Math. (Basel) 98(2012), no. 6,
P. 555-560. MR 2935661.

\bibitem{AD}
{\sl Antonyan~S.A., Dobrowolski~T.}, Locally contractible coset 
spaces, to appear in: Forum Math. 

\bibitem{HK}
{\sl Hofmann~K.H., Kramer~L.},
Transitive actions of locally compact groups on locally contractible 
spaces, J. reine angew. Math., DOI 10.1515/crelle-2013-0036.

Erratum: J. reine angew. Math., DOI 10.1515/crelle-2013-5001.

\bibitem{HM}
{\sl Hofmann~K.H., Morris~S.A.},
The Structure of Compact Groups, Third Edition, Walter de Gruyter, Berlin 2013.

\bibitem{Rash}
{\sl Rashevsky~P.C.},
On joining of any two points of totally non-holonomic space by 
admissible curve, Uch. Zap. Ped. In-ta im. Leebknechta., Ser. 
Fiz. Mat. Nauk, 1938, no. 2, P. 83-94. (Russian).

\bibitem{Ch}
{\sl Chow~W.L.},
\"Uber systeme von linearen partiellen differential gleichungen 
erster ordnung, Math. Ann., 1939, V. 117, P. 98-105. 

\bibitem{Pont}
{\sl Pontryagin~L.S., Boltyanskii~V.G., Gamkrelidze~R.V., 
Mishchenko~E.F.}, Mathematical theory of optimal processes.
M.: Nauka, 1961. (Russian).

Engl. transl.: Interscience Publishers John Wiley \& Sons,
Inc. New York-London 1962.

\bibitem{Ber-94} 
{\sl Berestovskii~V,N.},
Geodesics of nonholonomic left-invariant intrinsic metrics on the 
Heisenberg groups and isoperimetric curves on the Minkowski plane, 
Siber. Math. J. 35 (1994), no. 1, 1-8.
 

\bibitem{BV-92} 
{\sl Berestovskii~V.N., Vershik~A.M.},
Manifolds with intrinsic metrics and nonholonomic spaces, 
Advances in Soviet Mathematics 9 (1992) 253-267.

\bibitem{Ber-2004} 
{\sl Berestovskii~V.N.},
Similarly homogeneous locally complete spaces with
inner metric, Russian Math. (Iz. VUZ), 2005, v.~48, {\No}~11, p.~1-19.

\bibitem{Ber-90} 
{\sl Berestovskii~V.N.},
Homogeneous spaces with inner metric, 2nd Doct. thesis. (Russian). 
Sobolev Institute of Mathematics (SD AS of USSR), Novosibirsk, 1990. 269~pp.


\bibitem{Gro}
{\sl Gromov~M.},
Groups of polynomial growth and expanding maps, Inst. Hautes \'Etudes 
Sci. Publ. Math., 1981, V. 53, P. 53-78.

\bibitem{Al}
{\sl Aleksandrov~A.D.},
Inner geometry of convex surfaces, M.;K.: OGIZ, 1948. (Russian). 

\bibitem{BGP}
{\sl Burago~Yu., Gromov~M., Perel'man~G.},
Russian Math. Surveys 1992, V. 47, no. 2, P. 1-58.


\bibitem{Ber-86}
{\sl Berestovskii~V.N.},
Spaces with bounded curvature and distance geometry, Siber. Math. 
J. 27 (1986), \No 1, 8-19.

\bibitem{Hey}
{\sl Heyer~H.},
Probability measures on locally compact groups.
Ergebnisse der Mathematik und ihrer Grenzgebiete, Band 94.
Springer-Verlag, Berlin-New York, 1977.

\bibitem{Mil}
{\sl Milnor~J.},
Curvatuters of left invariant metrics on Lie groups, Adv. Math., 
1976, V. 21, no. 3, P. 293-329. 

\bibitem{Ber-92} 
{\sl Berestovskii~V.N.},
Compact homogeneous manifolds with integrable invariant 
distributions, Russian Math. (Iz. VUZ), 36:6 (1992), 39-45.


\bibitem{Ber-95} 
{\sl Berestovskii~V.N.},
Compact homogeneous manifolds with integrable invariant distributions, 
and scalar curvature,  Mat. Sbornik, 186(1995), no. 7, 941-950.

\bibitem{Ber-96}
{\sl Berestovskii~V.N.},
Mathematical Notes, 1996, V. 58, no. 3, P. 905-909. 

\bibitem{Berg}
{\sl Berard-Bergery~L.}, Sur la courbure des m\'etriques Riemanniennes 
invariantes des groupes de Lie des espaces homog\`enes, Ann. scient. 
\'Ec. Norm. Sup. 4 s\'erie, V. 11, 1978, P. 543-576.

\bibitem{Bes2}
{\sl Besse~A.L.}, Einstein manifolds. Ergebnisse der Mathematik 
und ihrer Grenzgebiete (3) [Results in Mathematics and Related 
Areas (3)], 10. Springer-Verlag, Berlin, 1987.

\bibitem{Man}
{\sl Manturov~O.V.},
Homogeneous Riemannian spaces with irreducible rotation group. 
Trudy Semin. Vect. Tenz. Anal., 1966, V. 13, P. 68-145. 

Engl. transl.: Tensor and vector analysis, Gordon and Breach, 
Amsterdam, 1998.P. 101-192.  

\bibitem{W1}
{\sl Wolf~J.A.},
The geometry and structure of isotropy irreducible homogeneous 
spaces, Acta Math., 1968, V. 120, P. 59-148. correction, Acta 
Math. 1984, V. 152, no. 1-2, P. 141-142.

\bibitem{vgorvich1}
{\sl Gorbatsevich~V.V.},
Invariant intrinsic Finsler metrics on homogeneous spaces and 
strong subalgebras in Lie algebras, Sib. Math. J. 2008, V. 49, no. 1, P. 36-47. 
 

\end{thebibliography}
\end{document}